\providecommand{\abs}[1]{\left\lvert#1\right\rvert}
\providecommand{\norm}[1]{\left\lVert#1\right\rVert}
\DeclareMathOperator{\R}{\mathbb{R}}
\DeclareMathOperator{\N}{\mathbb{N}}
\newtheorem{theorem}{Theorem}[section]
\newtheorem{lemma}[theorem]{Lemma}
\newtheorem{proposition}[theorem]{Proposition}
\newtheorem{definition}[theorem]{Definition}
\theoremstyle{definition}
\newtheorem{remark}[theorem]{Remark}
\title{Non-uniqueness of Weak Solutions to the Wave Map Problem}
\author{Klaus Widmayer}
\subjclass{35L05, 35L71}
\keywords{wave maps, weak solutions, weak-strong uniqueness}
\thanks{Support of the National Science Foundation DMS-1001674 is gratefully
acknowledged.}
\address{Courant Institute of Mathematical Sciences, 251 Mercer Street, New York
10012-1185 NY, USA}
\email{klaus@cims.nyu.edu}
\begin{document}

\begin{abstract}
In this note we show that weak solutions to the wave map problem in the energy-supercritical dimension 3 are not unique. On the one hand, we find weak solutions using the penalization method introduced by Shatah \cite{MR933231} and show that they satisfy a local energy inequality. On the other hand we build on a special harmonic map to construct a weak solution to the wave map problem, which violates this energy inequality.

Finally we establish a local weak-strong uniqueness argument in the spirit of Struwe \cite{MR1692140} which we employ to show that one may even have a failure of uniqueness for a Cauchy problem with a stationary solution. We thus obtain a result analogous to the one of Coron \cite{MR1067779} for the case of the heat flow of harmonic maps.
\end{abstract}

\maketitle

\tableofcontents

\section{Introduction}
The subject under consideration in this article is the following ``wave map'' problem. For a map $\varphi:\R^{1+d}\rightarrow (M,g)$ on Minkowski space $\R^{1+d}$ to a Riemannian manifold $(M,g)$ we seek to find the critical points of the Lagrangian
\begin{equation}\label{eq:action}
 \mathcal{L}(\varphi):=\frac{1}{2}\int_{\R^{1+d}} \langle\partial^\alpha\varphi, \partial_\alpha\varphi\rangle_g \, dx dt,
\end{equation}
where we raise indices using the Minkowski metric $\eta=\textrm{diag}\{-1,1,\ldots,1\}$ and repeated indices are to be summed over. The corresponding Euler-Lagrange equations yield the Cauchy problem
\begin{equation}\label{eq:strong_wm}
 \mathcal{D}_\alpha\partial^\alpha\varphi=0,\quad
 \varphi(0)=\varphi_0,\;
 \partial_t\varphi(0)=\varphi_1,
\end{equation}
for given $(\varphi_0,\varphi_1)$ in appropriate function spaces and $\mathcal{D}_\alpha$ the induced covariant derivative on the pull-back tangent bundle $\varphi^{-1}TM$. 

For initial data $(\varphi_0,\varphi_1)\in\dot{H}^s\times\dot{H}^{s-1}$ the scaling invariant Sobolev space has differentiability exponent $s=\frac{d}{2}$.

The well-posedness theory in the subcritical and critical cases ($s\geq\frac{d}{2}$) has undergone much development, which is too extensive to be summarized exhaustively here. For the case of $M$ a sphere, a first important step beyond techniques based on Strichartz estimates was the work of Klainerman, Machedon and Selberg \cite{MR1381973,MR1452172}, where local (in time) well-posedness for regularities $s>\frac{d}{2}$ was proved using the Wave-Sobolev (or $X^{s,b}$) spaces. Subsequently Tao \cite{MR1869874} established global well-posedness for regularities $s\geq \frac{d}{2}$ in dimensions $d\geq 2$. Finally, in \cite{MR2130618} Tataru showed the global well-posedness for these regularities for a wide range of target geometries (in particular for all smooth compact manifolds) using adapted function spaces. These results rely on modern methods from harmonic analysis. In contrast, for dimensions $d\geq 4$ Shatah and Struwe \cite{MR1283026} found a way of establishing global well-posedness using gauge theory.

In the supercritical setting $s<\frac{d}{2}$, heuristically one expects ill-posedness and is thus led to the study of weak solutions in the energy class $\dot{H}^1\times L^2$. An advantage of these is their relatively easy constructability using penalization methods -- see Shatah \cite{MR933231} for the case of spheres and Freire \cite{MR1421290} for that of compact homogeneous spaces.

In a similar vein, for equivariant geometries the wave map equation reduces to an ordinary differential equation and one can explicitly construct self-similar solutions that develop singularities in finite time (see Shatah \cite{MR933231} for first such examples). It is then not difficult to show that uniqueness of solutions may fail. A more detailed characterisation using Besov spaces was given by Germain in \cite{MR2450171} and \cite{MR2494812}. In particular, these works support the intuition that stationary weak solutions (i.e.\ weak harmonic maps) should be unique amongst wave maps satisfying an energy inequality, provided they minimize the Dirichlet energy. As we shall see later (in section \ref{sec:wwm}), the present article builds on this train of thought.

Furthermore, ill-posedness has been studied more comprehensively by D'An\-co\-na and Georgiev in \cite{MR2175916}, where inter alia a non-uniqueness result with data of supercritical (but arbitrarily close to critical) regularity in dimension $d=2$ is given.

On the other hand, in dimensions $d\geq 4$ Masmoudi and Planchon \cite{MR2928107} used gauge theory to prove unconditional uniqueness\footnote{i.e.\ uniqueness without further assumptions of boundedness of higher order Lebesgue norms} of solutions in the natural class $\dot{H}^{\frac{d}{2}}\times\dot{H}^{\frac{d}{2}-1}$.

\subsection{Plan of the Article}
In the present article we study weak solutions of the Cauchy problem \eqref{eq:strong_wm} in dimension $d=3$ and $M=\mathbb{S}^2$ with the induced metric from the embedding\footnote{We adopt this external point of view for the rest of this article, in particular also for the formulation of the equation.} $\mathbb{S}^2\hookrightarrow\R^3$. This is the supercritical case for the energy norm $\dot{H}^1$: \eqref{eq:strong_wm} is subcritical in dimension $d=1$, critical in dimension $d=2$ and supercritical for $d\geq 3$.

Our main results are Theorems \ref{thm:nonuniq_wm1} and \ref{thm:nonuniq_wm2}, which assert the non-unique\-ness of weak solutions by exhibiting different weak solutions to the same Cauchy problem.

More precisely, after the necessary groundwork on energy equalities in section \ref{ssec:energy} we recall Shatah's method for constructing weak solutions by penalization (see section \ref{ssec:penal}, or \cite{MR933231}). This method makes use of energy conservation, and the solutions obtained thereby satisfy local (and global) energy inequalities. In section \ref{ssec:special_wm} we contrast this by giving the construction of a weak wave map which does not satisfy such a local energy inequality (based on an example of a harmonic map which fails to minimize the Dirichlet energy, given by H\'elein in \cite{MR1913803}). We thus establish the claimed non-uniqueness of weak solutions, Theorem \ref{thm:nonuniq_wm1}.

In section \ref{sec:weak-strong} we prove a weak-strong uniqueness result in analogy to the one of Struwe \cite{MR1692140}. This is interesting in its own right, but we employ it here to show that uniqueness of weak solutions can fail even in a scenario that allows for stationary solutions. This is our final result, Theorem \ref{thm:nonuniq_wm2}, providing an analogy to the one of Coron \cite{MR1067779} for the heat flow of harmonic maps.

It is worth noting that our strategy of proof applies in more general situations: On the one hand, whenever there is a weak harmonic map whose stress-energy tensor does not vanish we can construct a weak wave map that violates the energy inequality -- in particular, this is the case if the harmonic map is not energy minimizing. On the other hand, if we can find weak wave maps using penalization they will satisfy the energy inequality, so non-uniqueness is proved. Depending on the smoothness of the harmonic map and the geometry of the target involved, we might also have a local weak-strong uniqueness theory, in which case we can prove a result analogous to Theorem \ref{thm:nonuniq_wm2}.

\subsection{Notation}
Coordinates in Minkowski space $\R^{1+3}$ are written $(x^0,x^1,x^2,x^3)$ or $(t,x^1,x^2,x^3)$ and we will adhere to the convention of using Roman letters for indices running from $1$ to $3$ and Greek letters for those running from $0$ to $3$.

The natural setting for the energy (in-)\! equalities we work with is that of truncated (forward or backward) cones: We denote by 
\begin{equation*}
 K(\tau,p):=\{(s,x)\in\R\times\R^3:\;\abs{x-p}<\abs{s-s_0}\}
\end{equation*}
the light cone through $(\tau,p)\in\R^{1+3}$, and write $K_a^b(\tau,p):=K(\tau,p)\cap [a,b]\times\R^3$, $a<b$, for its truncation, with side $M_a^b(\tau,p)$. Its time slices are disks 
\begin{equation*}
 D(s,\tau,p):=K(\tau,p)\cap\{s\}\times\R^3
\end{equation*}
of radius $\abs{\tau-s}$, which is implicit in the notation. If we want to make the dependence on a radius $r>0$ explicit we write $D(s,\tau,p;r)$ or $D_r$.

In many cases we will drop parts of the notation which are clear from the context or not relevant for the situation at hand. An instance of this is the time coordinate of the center of a cone (denoted $\tau$ above), which we will frequently omit and thus write expressions like $K(p)$, $D(s,p;r)$, $D(s,p)$ for $(s,p)\in\R^{1+3}$. 

When referring to $K_a^b(p)$ we will often speak about a truncated cone with base $D(a,p)$, top $D(b,p)$ and side $M_a^b(p)$, or a truncated cone based at time $a$.

Integration in $\R^{1+3}$ and on hypersurfaces is performed using the standard Le\-besgue measure or the induced measure, respectively.

For a function $f:\R^{1+3}\rightarrow\R$ we shall denote by $\nabla f=(\partial_i f)_{1\leq i\leq 3}$ its gradient in the space variables, i.e.\ the vector of first order partial derivatives in the last $3$ variables of $\R^{1+3}$. In contrast, $Df=(\partial_\mu f)_{0\leq \mu\leq 3}$ shall denote the vector of all first order partial derivatives.

Unless stated otherwise, the function spaces we use are based on time slices of Minkowski space. In the case of spacetime norms we write the condition on the time coordinate first, e.g.\ $L^p(\R;X)$ denotes the space of functions $f$ on Minkowski space with $\int_{\R} \norm{f(t)}_X^p dt < \infty$, $X$ being a Banach space of functions on $\R^3$.

\vspace{10pt}
\noindent\textbf{Acknowledgement.} The author is indebted to Pierre Germain for suggesting this problem to him and very grateful for the many interesting discussions that ensued.

\section{Weak Wave Maps to a Sphere}\label{sec:wwm}
We start with a
\begin{definition}\label{def:weak_wm}
 A function $u:\R^{1+3}\rightarrow\R^3$ is a \emph{weak solution} of the wave map equation (or \emph{weak wave map}) from $\R^{1+3}$ to $\mathbb{S}^2$ provided
 \begin{enumerate}
  \item $\abs{u(t,x)}=1$ a.e.,
  \item $u:\R\rightarrow\dot{H}^1$ is weakly continuous in $t$,
  \item $\partial_t u:\R\rightarrow L^2$ is weakly continuous in $t$,
 \end{enumerate}
 and
 \begin{equation}\label{eq:weak_wm}
  \Box u + (\partial_\alpha u\cdot\partial^\alpha u) u=0
 \end{equation}
 holds in the sense of distributions.

 We say that $u$ is a \emph{local} weak solution (or \emph{local} weak wave map) if the above requirements only hold on a subset of the space-time $\R^{1+3}$.
\end{definition}

\begin{remark}\label{rem:wwm_def}
\vspace{10pt}
 \begin{enumerate}
  \item If a smooth function $u$ is a weak solution, then \eqref{eq:weak_wm} can be shown to hold in the classical sense. Hence the concept of weak solutions generalizes that of classical smooth solutions.
  \item\label{rem:lorentz_wwm} \emph{Lorentz transformations preserve weak wave maps:} Consider a linear transformation $\Lambda$ that leaves the Minkowski metric invariant (a so-called \emph{Lo\-rentz transformation}), i.e.\ $\eta(\Lambda \cdot,\Lambda \cdot)=\eta(\cdot,\cdot)$. For $u$ a (local) weak wave map one sees immediately that $u\circ\Lambda$ satisfies $1.$-$3.$ in the above definition. Furthermore, using a change of variables one checks that $u\circ\Lambda$ weakly solves equation \eqref{eq:weak_wm}.
 \end{enumerate}
\end{remark}

Associated to the wave map equation is the \emph{Cauchy problem} of finding a (weak) solution to the wave map equation that assumes given initial data:
\begin{definition}
 Given $(f,g)\in\dot{H}^1(\R^3)\times L^2(\R^3)$ with $\abs{f}=1$ and $f\cdot g=0$ we shall say that $u$ is a (local) solution to the Cauchy problem
  \begin{equation}\label{eq:CP}  
 \begin{cases}
  \begin{aligned}
   \Box u + (\partial_\alpha u\cdot\partial^\alpha u) u&=0,\\
   u(0)&=f,\\
   \partial_t u(0)&=g,
  \end{aligned}
 \end{cases}
 \end{equation}
 if $u$ is a (local) weak wave map and the equalities concerning the initial data hold in the sense of $\dot{H}^1(\R^3)$ and $L^2(\R^3)$.
\end{definition}

\subsection{The Energy}\label{ssec:energy}
An important feature of classical wave maps is the conservation of energy. In the variational framework such conservation laws arise naturally by integrating the stress-energy tensor (see section \ref{ssec:SE_tensor} below), and can be obtained in global or local versions.

\begin{definition}
 To a sufficiently regular function $u:\R^{1+3}\rightarrow\R$ we associate the \emph{energy}
 \begin{equation*}
  E(u)(t):=\frac{1}{2}\int_{\R^3}\abs{\partial_t u(t)}^2+\abs{\nabla u(t)}^2\,dx.
 \end{equation*}
 We say $u$ has \emph{finite energy} if $E(u)(t)<\infty$ for all $t$.
 
 The \emph{energy} on a disk $D(t,p)\subset \{t\}\times\R^3$ at time $t$ and centered at $p$ then is defined to be
 \begin{equation*}
  E(u;D(t,p)):=\frac{1}{2}\int_{D(t,p)}\abs{\partial_t u(t)}^2+\abs{\nabla u(t)}^2\,dx,
 \end{equation*}
 and the \emph{flux} across the sides $M_s^{t}(p)\subset [s,t]\times\R^3$ of a truncated cone (again centered at $p$) is given by
 \begin{equation*}
  Flux(u;M_s^{t}(p)):=\frac{1}{2\sqrt{2}}\int_{M_s^{t}(p)}\abs{\nabla u - \frac{x-p}{\abs{x-p}}\partial_t u}^2 \,d\sigma.
 \end{equation*}
 We note that for ease of notation we suppress the radii from the notation.
 \end{definition}

We recall that smooth wave maps satisfy the \emph{(global) energy equality}
\begin{equation}\label{eq:gl_en_smooth}
 E(u)(s)= E(u)(t)
\end{equation}
and the \emph{local energy conservation law}
\begin{equation}\label{eq:loc_en_smooth}
 E(u;D(s,p))=E(u;D(t,p))+Flux(u;M_s^{t}(p))
\end{equation}
for any $s<t$.

Nota bene: For weak solutions, in general these do \emph{not} hold. However, for the special weak wave maps constructed as below by a penalization procedure we will be able to replace them by appropriate inequalities (which in the local case will hold on almost every cone), once we fix $s$ to be the initial time -- see section \ref{ssec:penal}.

\subsubsection{The Stress-Energy Tensor}\label{ssec:SE_tensor}
Taking the variational perspective of wave maps it is natural to consider (see \cite[Chapter 2]{MR1674843}) the stress-energy tensor 
\begin{equation*}
 T_{\alpha\beta}:=\frac{1}{2}\eta_{\alpha\beta} \langle\partial^\gamma \varphi, \partial_\gamma \varphi\rangle_{\R^3} - \langle\partial_\alpha \varphi,\partial_\beta \varphi\rangle_{\R^3}
\end{equation*}
associated to a wave map $\varphi$ of locally finite energy. This tensor incorporates many symmetries of the problem and can also be used to obtain conserved quantities. In particular, it can be shown that under the assumption of sufficient regularity, extremizers of the action integral \eqref{eq:action} have a divergence-free stress-energy tensor, i.e.\ one has
\begin{equation*}
 \partial^\alpha T_{\alpha\beta}=0\;\textrm{for } 0\leq\beta\leq 3.
\end{equation*}

For later purposes we need to understand how these divergences behave under changes of coordinates in Minkowski space. We thus consider Lorentz transformations $\Lambda:\R^{1+3}\rightarrow\R^{1+3}$. A direct calculation (which we include in appendix \ref{pf:lem:SE_transf}, page \pageref{pf:lem:SE_transf}) then gives:
\begin{lemma}\label{lem:SE_transf}
 Let $T_{\alpha\beta}$ be the stress-energy tensor of a function $f:\R^4\rightarrow\R$ with finite energy. Then the stress-energy tensor $\widetilde{T}_{\alpha\beta}$ of $f\circ\Lambda$ has divergence
\begin{equation*}
 \partial^\alpha \widetilde{T}_{\alpha\beta}=\Lambda^\nu_\beta\left[\partial^{\sigma}T_{\sigma\nu} \right]\circ\Lambda.
\end{equation*}
\end{lemma}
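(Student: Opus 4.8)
The plan is to first establish that the stress-energy tensor transforms as a genuine $(0,2)$-tensor under $\Lambda$, namely that $\widetilde{T}_{\alpha\beta} = \Lambda^\mu_\alpha \Lambda^\nu_\beta\,(T_{\mu\nu}\circ\Lambda)$, and then to differentiate this identity. The only analytic input I need is the chain rule $\partial_\alpha(f\circ\Lambda) = \Lambda^\mu_\alpha\,(\partial_\mu f)\circ\Lambda$, which holds almost everywhere: since $\Lambda$ is linear, composition with $\Lambda$ commutes with differentiation, and because $f$ has merely finite energy I would justify this by approximating $f$ in $\dot{H}^1$ by smooth functions and passing to the limit, using that $u\mapsto u\circ\Lambda$ is bounded on $L^2$ (here $\abs{\det\Lambda}=1$).

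Second, I record the two algebraic consequences of the Lorentz condition $\eta_{\mu\nu}\Lambda^\mu_\alpha\Lambda^\nu_\beta = \eta_{\alpha\beta}$ that will do all the work: this identity itself, and its contravariant counterpart $\eta^{\alpha\beta}\Lambda^\mu_\alpha\Lambda^\nu_\beta = \eta^{\mu\nu}$, obtained by inverting $\Lambda^T\eta\Lambda=\eta$. Feeding the chain rule into the definition of $\widetilde{T}_{\alpha\beta}$, the contravariant identity shows that the scalar $\langle\partial^\gamma f,\partial_\gamma f\rangle$ is Lorentz invariant (up to composition with $\Lambda$), while the covariant identity turns the contracted factor in the trace term into $\eta_{\alpha\beta}$. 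The crucial point is that the two half-trace contributions cancel exactly, leaving the clean transformation law $\widetilde{T}_{\alpha\beta} = \Lambda^\mu_\alpha\Lambda^\nu_\beta\,(T_{\mu\nu}\circ\Lambda)$.

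Third, I take the (distributional) divergence of this identity. Differentiating $\Lambda^\mu_\alpha\Lambda^\nu_\beta\,T_{\mu\nu}(\Lambda x)$ and applying the chain rule once more produces a third factor $\Lambda^\sigma_\rho$; raising the index $\alpha$ with $\eta^{\alpha\rho}$ and contracting the two factors $\Lambda^\mu_\alpha\Lambda^\sigma_\rho$ against $\eta^{\alpha\rho}$ via the contravariant identity collapses them to $\eta^{\mu\sigma}$, which reassembles $\partial^\sigma T_{\sigma\nu}$. After relabeling, this yields exactly $\partial^\alpha\widetilde{T}_{\alpha\beta} = \Lambda^\nu_\beta\,[\partial^\sigma T_{\sigma\nu}]\circ\Lambda$.

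I expect the only genuine subtlety — the main obstacle — to be the distributional bookkeeping in this last step rather than the tensor algebra: since $f$ has only finite energy, $T_{\mu\nu}$ is merely $L^1_{loc}$ and its divergence is a distribution, so the manipulation ``$\partial^\alpha$ of a composition'' must be read as an identity of distributions. I would make it rigorous by testing against $\varphi\in C_c^\infty$, moving the derivative onto $\varphi$, and changing variables $y=\Lambda x$ (legitimate because $\abs{\det\Lambda}=1$), thereby reducing the claim to the corresponding pairing of $\partial^\sigma T_{\sigma\nu}$ against the transformed test function; alternatively one proves everything for smooth $f$ and passes to the limit in $\dot{H}^1$. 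Either way, the tensorial algebra above is what remains.
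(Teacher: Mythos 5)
Your proof is correct and takes essentially the same route as the paper's: the chain rule $\partial_\gamma(f\circ\Lambda)=\Lambda^\sigma_\gamma(\partial_\sigma f)\circ\Lambda$ combined with the covariant and contravariant Lorentz identities — indeed the paper's intermediate expression for $\widetilde{T}_{\alpha\beta}$ is exactly your transformation law $\widetilde{T}_{\alpha\beta}=\Lambda^\mu_\alpha\Lambda^\nu_\beta\,(T_{\mu\nu}\circ\Lambda)$ with the trace term rewritten via $\Lambda^\mu_\alpha\Lambda^\nu_\beta\eta_{\mu\nu}=\eta_{\alpha\beta}$, and its divergence computation likewise collapses the contraction $\eta^{\alpha\kappa}\Lambda^\mu_\alpha\Lambda^\sigma_\kappa=\eta^{\mu\sigma}$. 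Your extra attention to the distributional reading (testing against $C_c^\infty$ functions and changing variables with $\abs{\det\Lambda}=1$) is a sound refinement of the paper's formal calculation, and is indeed the right justification given that for finite-energy $f$ the tensor $T_{\alpha\beta}$ is only $L^1_{loc}$ and its divergence may be a measure, as in the application to $v_\lambda$.
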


\subsection{Construction of Weak Solutions by Penalization}\label{ssec:penal}
In \cite{MR933231}, Shatah showed how to construct finite energy solutions to the Cauchy problem \eqref{eq:CP} using a penalization method. We briefly sketch the steps involved.

Instead of trying to solve \eqref{eq:CP} directly one considers the perturbed problem
\begin{equation}\label{eq:CP_penal}
 \begin{cases}
  \begin{aligned}
  \Box u_n + n^2(\abs{u_n}^2-1)u_n&=0,\\
  u_n(0)&=f,\\
  \partial_t u_n(0)&=g,
 \end{aligned}
 \end{cases}
\end{equation}
where $\abs{f}=1$ and $f\cdot g=0$.
This problem is subcritical and we thus get global strong solutions $u_n$ for all $n\in\N$ with $u_n\in C(\R,\dot{H}^1_{loc})$, $\partial_t u_n\in C(\R,L^2_{loc})$. Moreover, we have the energy equalities
\begin{equation}\label{eq:gl_en_penal}
 \int_{\R^3}\frac{1}{2}\abs{\partial_t u_n(t)}^2+\frac{1}{2}\abs{\nabla u_n(t)}^2+n^2 F(u_n(t)) \,dx = \int_{\R^3}\frac{1}{2}\abs{\nabla f}^2 + \frac{1}{2}\abs{g}^2 \,dx,
\end{equation}
where $F:\R^{3}\rightarrow\R$, $F(x):=\frac{1}{4}(\abs{x}^2-1)^2$ is non-negative.
Since the terms on the left-hand side are bounded uniformly in $n\in\N$ and $t\in\R$ we can extract weakly star convergent subsequences to get a global weak solution (with weakly continuous first order derivatives) -- the term $n^2 F(u_n)$ enforcing that $\abs{u}=1$.

More precisely, one obtains a weak wave map $u$ satisfying the weak-star convergences $u_n \stackrel{*}{\rightharpoonup} u$ in $L^\infty_{loc}(\R,\dot{H}^1)$ and $\partial_t u_n \stackrel{*}{\rightharpoonup} \partial_t u$ in $L^\infty_{loc}(\R,L^2)$. These in turn imply the following energy \emph{in}equalities for $u$:
\begin{equation*}
 \begin{aligned}
  \int_{\R^3}\frac{1}{2}\abs{\partial_t u(t)}^2+\frac{1}{2}\abs{\nabla u(t)}^2 \,dx &\leq \liminf_{n\rightarrow\infty}\int_{\R^3}\frac{1}{2}\abs{\partial_t u_n(t)}^2+\frac{1}{2}\abs{\nabla u_n(t)}^2\\
  &\leq \int_{\R^3}\frac{1}{2}\abs{\nabla f}^2 + \frac{1}{2}\abs{g}^2 \,dx.
 \end{aligned}
\end{equation*}
Hence for any $t>0$ we have the global energy inequality
\begin{equation*}
 E(u)(0)\geq E(u)(t).
\end{equation*}

\begin{remark}\label{rem:local_penal}
 This construction may also be carried out locally: Assume that we are given initial data $(f,g)$ that satisfy $\abs{f}=1$ and $f\cdot g=0$ on a disk $D(0,p)$ at the initial time. Then we may solve \eqref{eq:CP_penal} on a (truncated) cone $C$ with base $D(0,p)$, obtaining functions $u_n$ defined on $C$. These will satisfy the energy equality
 \begin{equation}\label{eq:loc_en_penal}
   E_n(u_n;D(s,p))=E_n(u_n;D(t,p))+Flux_n(u_n;M_s^{t}(p)),
 \end{equation}
 where we have added the term $n^2 F(u_n(t))$ to both the flux and energy integrals and denoted the corresponding quantities with a subscript $n$:
 \begin{equation*}
  \begin{aligned}
   E_n(u_n;D(s,p))&=\frac{1}{2}\int_{D(s,p)}\abs{\partial_t u_n(s)}^2+\abs{\nabla u_n(s)}^2+n^2 F(u_n(s))\,dx,\\
  Flux_n(u_n;M_s^{t}(p))&=\frac{1}{2\sqrt{2}}\int_{M_s^{t}(p)}\abs{\nabla u_n - \frac{x-p}{\abs{x-p}}\partial_t u_n}^2 +n^2 F(u_n) \,d\sigma.
  \end{aligned}
 \end{equation*}

 In particular, by the positivity of the flux term we have the energy inequalities
 \begin{equation*}
  \int_{D(0,p)}\frac{1}{2}\abs{\nabla f}^2 + \frac{1}{2}\abs{g}^2 \,dx=E_n(u_n;D(0,p))\geq E_n(u_n;D(t,p))
 \end{equation*}
 for all $t$ on the truncated cone. We may hence pass to the limit as above to obtain a weak wave map $u$ on $C$.

 In particular, for data $(f,g)$ which are not globally integrable, in this way we may still construct local solutions to the Cauchy problem \eqref{eq:CP}. It is important to notice that this construction gives local solutions, but only a rough local energy inequality which disregards the flux term.
\end{remark}

Establishing a local energy inequality requires a bit more care, since the flux term itself does not behave well under the limiting process. The result is the following
\begin{lemma}[Energy Inequality for Weak Solutions Obtained by Penalization]\label{lem:en_ineq_penal}
 Let $u$ be a weak wave map obtained by penalization. Then for almost every cone based at the initial time we have
 \begin{equation}\label{eq:local_en_ineq}
  E(u;D(0,p))\geq E(u;D(t,p))+Flux(u;M_0^{t}(p)).
 \end{equation}
\end{lemma}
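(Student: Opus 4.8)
The plan is to pass to the limit in the \emph{exact} local energy identity \eqref{eq:loc_en_penal} satisfied by the penalizing sequence $u_n$ (these are strong solutions, so the identity holds on every truncated cone). Making the apex $(\tau,p)$ explicit and abbreviating $\Phi_n(\tau):=Flux_n(u_n;M_0^{t}(\tau,p))$ and $\Phi(\tau):=Flux(u;M_0^{t}(\tau,p))$, the identity reads
\begin{equation*}
 E_n(u_n;D(0,\tau,p))=E_n(u_n;D(t,\tau,p))+\Phi_n(\tau).
\end{equation*}
Since $\abs{f}=1$ forces $F(f)=0$, the left-hand side equals the purely initial energy $I(\tau):=E(u;D(0,\tau,p))=\frac12\int_{D(0,\tau,p)}\abs{\nabla f}^2+\abs{g}^2\,dx$, which is independent of $n$. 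It thus suffices to bound the two right-hand terms from below in the limit. The top-disk energy is the easy term: the slicewise weak convergences $u_n(t)\rightharpoonup u(t)$ in $\dot H^1$ and $\partial_t u_n(t)\rightharpoonup\partial_t u(t)$ in $L^2$ restrict to any fixed disk, so weak lower semicontinuity of the $L^2$-norm gives $E(u;D(t,\tau,p))\le\liminf_n E(u_n;D(t,\tau,p))\le\liminf_n E_n(u_n;D(t,\tau,p))$ for every radius (using $F\ge0$ in the last step).

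The flux term is the main obstacle, because it is an integral over the lateral surface $M_0^{t}(\tau,p)$ and traces on a fixed hypersurface do not pass to weak limits; its quadratic integrand would moreover require strong rather than weak convergence for a naive lower-semicontinuity argument. To circumvent this I would foliate spacetime by the lateral cone boundaries themselves: the level sets of $\ell(s,x):=\abs{x-p}+s$ are precisely the surfaces $M(\tau,p)$, with $\abs{\nabla_{s,x}\ell}=\sqrt2$. The coarea formula then converts an average of fluxes over apices $\tau\in[\tau_1,\tau_2]$ into a solid integral over $R:=\{0\le s\le t,\ \tau_1\le\ell(s,x)\le\tau_2\}$,
\begin{equation*}
 \int_{\tau_1}^{\tau_2}\Phi_n(\tau)\,d\tau=\frac12\int_R\abs{\nabla u_n-\tfrac{x-p}{\abs{x-p}}\partial_t u_n}^2+n^2F(u_n)\,ds\,dx,
\end{equation*}
and similarly for $u$ (with $F(u)=0$). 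For $\tau_1>t$ the region $R$ stays away from the tip $x=p$, so the coefficient field $\tfrac{x-p}{\abs{x-p}}$ is bounded, the weak-$\ast$ convergences $u_n\stackrel{*}{\rightharpoonup}u$, $\partial_t u_n\stackrel{*}{\rightharpoonup}\partial_t u$ give $Du_n\rightharpoonup Du$ in $L^2(R)$, and the integrand is a nonnegative convex quadratic form in $Du_n$ plus the nonnegative penalization. Weak lower semicontinuity of such functionals, together with $n^2F(u_n)\ge0$, then yields the integrated flux estimate $\int_{\tau_1}^{\tau_2}\Phi(\tau)\,d\tau\le\liminf_n\int_{\tau_1}^{\tau_2}\Phi_n(\tau)\,d\tau$.

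It remains to combine these facts. Integrating the identity over $\tau\in[\tau_1,\tau_2]$ gives $\int_{\tau_1}^{\tau_2}\Phi_n(\tau)\,d\tau=\int_{\tau_1}^{\tau_2}I(\tau)\,d\tau-\int_{\tau_1}^{\tau_2}E_n(u_n;D(t,\tau,p))\,d\tau$; passing to $\liminf_n$, using Fatou's lemma together with the top-disk bound to control the energy integral from below, and then invoking the integrated flux estimate, one obtains
\begin{equation*}
 \int_{\tau_1}^{\tau_2}\Big[E(u;D(t,\tau,p))+\Phi(\tau)-I(\tau)\Big]\,d\tau\le0 .
\end{equation*}
Since this holds for every interval $[\tau_1,\tau_2]\subset(t,\infty)$, the integrand is nonpositive for almost every $\tau$, which is exactly \eqref{eq:local_en_ineq} on almost every cone based at the initial time. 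I expect the only genuinely delicate point to be the flux step: it is the surface nature of the flux that both necessitates the coarea reformulation and is responsible for the conclusion holding only on almost every cone rather than on every cone.
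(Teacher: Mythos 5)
Your proposal is correct and follows essentially the same strategy as the paper: both average the exact penalized energy identity over a one-parameter family of cones so that the troublesome lateral flux becomes a solid spacetime integral of a nonnegative quadratic form in $Du_n$ (your coarea reformulation is precisely the paper's ``mollification over cones of slightly differing sizes''), pass to the limit by weak lower semicontinuity, and then recover almost every cone by a Lebesgue differentiation argument. The only differences are cosmetic: the paper averages the base radius against a smooth bump $\frac{1}{\varepsilon}\psi(\delta/\varepsilon)$ at fixed height, whereas you integrate over an interval of apex times and make the coarea factor $\sqrt{2}$ explicit.
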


\begin{proof}
 By construction we have a sequence $(u_n)\subset C(\R,\dot{H}^1_{loc})$ with $(\partial_t u_n)\subset C(\R,L^2_{loc})$ whose limit is $u$. As noted above in \eqref{eq:loc_en_penal}, for these a slight modification of the global energy inequality \eqref{eq:gl_en_penal} holds:
 \begin{equation}\label{eq:loc_en_penal2}
   E_n(u_n;D(s,p))=E_n(u_n;D(t,p))+Flux_n(u_n;M_s^{t}(p)).
\end{equation}

 To prove the lemma we fix $s=0$ and want to pass to the limit as $n\rightarrow\infty$. We note that $u_n$ and $u$ have the same initial data, so that the left hand sides of \eqref{eq:local_en_ineq} and \eqref{eq:loc_en_penal2} already agree. In addition, the terms $n^2 F(u_n)$ are non-negative and uniformly bounded, so we may drop them at the cost of an inequality to obtain
 \begin{equation}\label{eq:loc_en_penal3}
   E(u;D(0,p))\geq E(u_n;D(t,p))+Flux(u_n;M_0^{t}(p)).
\end{equation}
 
 As before the energy term involving $L^2$ norms on time slices poses no problems when passing to the limit, it is the flux term we have to control. The flux term involves tangential derivatives on the sides of the cone, which a priori we cannot pass to the limit. To resolve this, the idea is to first ``mollify'' the energy inequality by integrating over cones of slightly differing sizes, thereby obtaining a flux-like quantity which behaves well with respect to the limit $n\rightarrow\infty$. Then we may pass to this limit to get back to $u$ and finally ``undo'' the mollification through a differentiation theorem.
 
 More precisely, let us fix a cone with base $D(0,p;r)$, whose radius we denote by $r>0$. Then for a small parameter $\varepsilon>0$ we consider the family of energy inequalities \eqref{eq:loc_en_penal3} for truncated cones with base $D(0,p;r+\delta)$, $\abs{\delta}<\varepsilon$ and fixed height $t$. We multiply these by $1 / \varepsilon \cdot\psi(\delta /  \varepsilon)$, where  $\psi:\R\rightarrow [0,1]$, $\int_{\R}\psi=1,$ is a smooth bump function with support in $[-1,1]$, and integrate over $\abs{\delta}<\varepsilon$. From the flux term this gives
 \begin{equation*}
  Flux^{\varepsilon}(u_n;M_0^{t}(p)):=\frac{1}{\varepsilon}\int_{\delta=-\varepsilon}^\varepsilon \psi(\delta / \varepsilon) \int_{M_0^t(p;r+\delta)} \abs{\nabla u_n - \frac{x-p}{\abs{x-p}}\partial_t u_n}^2 \,d\sigma d\delta.
 \end{equation*}
 The key observation is that this term can be viewed as a non-negative quadratic form involving integrals of first order derivatives of $u_n$ over time slices, so that by lower semicontinuity we can pass to the limit at the cost of an inequality to obtain
 \begin{equation*}
  \liminf_{n\rightarrow\infty}Flux^{\varepsilon}(u_n;M_0^{t}(p))\geq \frac{1}{\varepsilon}\int_{\delta=-\varepsilon}^\varepsilon \psi(\delta / \varepsilon) \int_{M_0^t(p;r+\delta)} \abs{\nabla u - \frac{x-p}{\abs{x-p}}\partial_t u}^2 \,d\sigma d\delta.
 \end{equation*}
 Finally, by the Lebesgue Differentiation Theorem we may let $\varepsilon\rightarrow 0$ to recover the flux term for $u$:
 \begin{equation*}
 \begin{aligned}
  \lim_{\varepsilon\rightarrow 0}\frac{1}{\varepsilon}\int_{\delta=-\varepsilon}^\varepsilon &\psi(\delta / \varepsilon) \int_{M_0^t(p;r+\delta)} \abs{\nabla u - \frac{x-p}{\abs{x-p}}\partial_t u}^2 \,d\sigma d\delta\\
  &=\int_{M_s^{t}(p)}\abs{\nabla u - \frac{x-p}{\abs{x-p}}\partial_t u}^2 \,d\sigma=2\sqrt{2}\;Flux(u;M_0^{t}(p))
 \end{aligned}
 \end{equation*}
 for almost every $p$.
 
 This same procedure leaves the energy terms unchanged, hence the lemma is proved.
\end{proof}

\subsection{A Special Weak Wave Map}\label{ssec:special_wm}
Here we exhibit a special weak wave map, which is smooth everywhere except on one ray. In addition, around this ray the energy inequality fails, which will be of key importance in the non-uniqueness proof of this article.

We build up this weak wave map by applying a Lorentz transform to a certain weak harmonic map.
\subsubsection{The Harmonic Map}
We interpret weak harmonic maps as stationary weak wave maps (according to Definition \ref{def:weak_wm}) and consider the following example (see H\'elein \cite{MR1913803}, Example 1.4.19, p.44):

Let $\mathbb{B}^3=\{x\in\R^3:\;\abs{x}\leq 1\}$ be the unit ball in $\R^3$ and $\sigma$ the stereographic projection to the equatorial plane,
\begin{equation*}
 \begin{aligned}
  \sigma:\;&\mathbb{S}^2\setminus\{(0,\;0,\;-1)\}\rightarrow\R^2,\\
         &(x^1,\;x^2,\;x^3)\mapsto\frac{(x^1,\;x^2)}{1+x^3}.
 \end{aligned}
\end{equation*}
Then for $0<\lambda < \infty$
\begin{equation*}
\begin{aligned}
 v_\lambda:\; &\mathbb{B}^3\rightarrow\mathbb{S}^2, \\
 &x\mapsto \sigma^{-1}\left(\lambda\sigma\left(\frac{x}{\abs{x}}\right)\right) 
\end{aligned}
\end{equation*}
is a (weak) harmonic map in $H^1(\mathbb{B}^3,\mathbb{S}^2)$. Moreover, its stress-energy tensor
\begin{equation*}
 S_{ij}:=-\langle\partial_i v_\lambda,\partial_j v_\lambda\rangle_{\R^3} + \frac{1}{2}\delta_{ij}\sum_{k=1}^3 \langle\partial_k v_\lambda,\partial_k v_\lambda\rangle_{\R^3}
\end{equation*}
satisfies
\begin{equation*}
 \partial_i S_{ij}=V_j \delta_{x=0}, \quad\textrm{where } V=\left(\begin{array}{c}0\\0\\s(\lambda)\end{array}\right),
\end{equation*}
$\delta_{x=0}$ denotes the Dirac Delta distribution at $x=0$ and
\begin{equation*}
 s(\lambda)=\begin{cases}0, &\lambda=1,\\ -\frac{8\pi}{(\lambda^2-1)^2}(\lambda^4-4\lambda^2 \log(\lambda)-1), &\lambda\neq 1.\end{cases}
\end{equation*}

When taking the point of view of wave maps \emph{we will identify $v_\lambda$ with the stationary mapping $\R^{1+3}\ni (t,x^1,x^2,x^3)\mapsto v_\lambda(x^1,x^2,x^3)$}. The associated stress-energy tensor reads
\begin{equation*}
 S_{\alpha\beta}:=\frac{1}{2}\eta_{\alpha\beta}\langle\partial_\gamma v_\lambda, \partial^\gamma v_\lambda\rangle_{\R^3} -\langle\partial_\alpha v_\lambda,\partial_\beta v_\lambda\rangle_{\R^3}
\end{equation*}
and agrees with $S_{ij}$ for $1\leq i,j\leq 3$, but has the additional components
\begin{equation*}
 S_{00}=-\frac{1}{2}\sum_{k=1}^3 \langle\partial_k v_\lambda,\partial_k v_\lambda\rangle_{\R^3}, \quad S_{0 i}=0.
\end{equation*}

Associated with this we have the divergence equations
\begin{equation}\label{eq:stat_divSE}
 \partial^\alpha S_{\alpha[\cdotp]}=\left(\begin{array}{c}0\\0\\0\\s(\lambda)\end{array}\right)\mathds{1}_t\times\delta_{x=0}.
\end{equation}

\subsubsection{Applying the Lorentz Transformation}
To get a non-stationary wave map from the above harmonic map we need only apply a Lorentz transform to $v_\lambda$ (or change coordinates in Minkowski space, to put it differently). Thus we fix $0<\nu<1$, set $\Theta:=\left(1-\nu^2\right)^{-1/2}$ and consider the Lorentz transform $\varLambda:\R^{1+3}\rightarrow \R^{1+3}$ represented by the matrix
\begin{equation*}
\varLambda=\left(\begin{array}{llll}
\Theta & 0 & 0 & -\nu \Theta\\
0 & 1 & 0 & 0\\
0 & 0 & 1 & 0\\
-\nu \Theta & 0 & 0 & \Theta
   \end{array}\right).
\end{equation*}

The harmonic maps $v_\lambda$ then give non-stationary weak wave maps $\varphi_\lambda:=v_\lambda\circ \varLambda$ -- see also Remark \ref{rem:wwm_def}.\ref{rem:lorentz_wwm}. Explicitly we have
\begin{equation*}
 \varphi_\lambda=v_\lambda \left(\Theta(t-\nu x^3),x^1,x^2,\Theta(x^3-\nu t)\right)=v_\lambda \left(x^1,x^2,\Theta(x^3-\nu t)\right).
\end{equation*}

These functions are weak solutions to the Cauchy problem
\begin{equation}\label{eq:CP_data1}
\begin{cases}
 \begin{aligned}
  \Box \varphi + (\partial_\alpha \varphi\cdot\partial^\alpha \varphi) \varphi&=0,\\
  \varphi(0)&=v_\lambda(x^1,x^2,\Theta x^3),\\
  \partial_t\varphi(0)&=-\Theta\nu(\partial_{x^3} v_\lambda)(x^1,x^2,\Theta x^3).
 \end{aligned}
\end{cases}
\end{equation}

The key property of these $\varphi_\lambda$ is the following violation of the energy inequality.
\begin{lemma}\label{lem:no_en_ineq}
 For $\lambda > 1$ the weak wave maps $\varphi_\lambda$ do not satisfy the local energy inequality \eqref{eq:local_en_ineq} on cones intersecting the set $\{x^3=\nu t\}$.
\end{lemma}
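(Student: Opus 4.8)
The plan is to show that the special wave map $\varphi_\lambda$ violates the local energy inequality \eqref{eq:local_en_ineq} by exploiting the fact that its stress-energy tensor has a non-vanishing divergence concentrated on the ray $\{x^3=\nu t\}$. The starting point is the harmonic map $v_\lambda$, whose stress-energy divergence \eqref{eq:stat_divSE} is driven by the source term $s(\lambda)$ on the ray $\{x=0\}$. Since $s(\lambda)\neq 0$ precisely when $\lambda\neq 1$ (and in particular for $\lambda>1$), the Lorentz-transformed map $\varphi_\lambda$ inherits a non-trivial divergence. By Lemma \ref{lem:SE_transf}, the divergence of the transformed stress-energy tensor $\widetilde{T}_{\alpha\beta}$ of $\varphi_\lambda$ is $\varLambda^\nu_\beta[\partial^\sigma S_{\sigma\nu}]\circ\varLambda$, so I would first compute this explicitly, tracking how the Dirac mass on $\{x=0\}$ is transported by $\varLambda$ to the line $\{x^1=x^2=0,\;x^3=\nu t\}$ and how the source vector $V=(0,0,0,s(\lambda))^{T}$ is rotated in the $(t,x^3)$-plane.

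\emph{Relating the divergence to the energy identity.} The heart of the argument is that the local energy conservation law \eqref{eq:loc_en_smooth} is nothing but the integral form of the $\beta=0$ component of the divergence relation $\partial^\alpha T_{\alpha\beta}=0$, obtained by integrating over a truncated cone $K_0^t(p)$ and applying the divergence theorem. Concretely, integrating $\partial^\alpha\widetilde{T}_{\alpha 0}$ over the truncated cone produces the boundary terms
\begin{equation*}
 E(\varphi_\lambda;D(0,p))-E(\varphi_\lambda;D(t,p))-Flux(\varphi_\lambda;M_0^t(p)),
\end{equation*}
so that the energy inequality \eqref{eq:local_en_ineq} is equivalent to the assertion that this boundary integral is non-negative. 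Thus I would integrate the distributional divergence of $\widetilde{T}_{\alpha 0}$ over a cone $K_0^t(p)$ whose interior meets the ray $\{x^3=\nu t\}$, and the source term will contribute exactly $-\varLambda^\nu_0\,s(\lambda)$ (times the appropriate measure of the ray inside the cone). The sign of this contribution is governed by the sign of $s(\lambda)$ and the factor $\varLambda^\nu_0=-\nu\Theta<0$ coming from the Lorentz boost. For $\lambda>1$ one checks that $s(\lambda)<0$ (the factor $\lambda^4-4\lambda^2\log\lambda-1$ is positive there), and combined with the negative boost coefficient the net source term has a sign that forces the boundary integral to be strictly negative, contradicting \eqref{eq:local_en_ineq}.

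\emph{Handling the singularity rigorously.} The main obstacle is that $\varphi_\lambda$ is only a weak solution, singular along the ray, so the divergence theorem cannot be applied naively; the stress-energy tensor is merely $L^1_{loc}$ and its divergence is a genuine distribution supported on a measure-zero set. To make the computation rigorous I would work with the weak formulation directly: test the distributional equation $\partial^\alpha\widetilde{T}_{\alpha 0}=(\text{source on the ray})$ against a family of cutoff functions approximating the indicator of the truncated cone $K_0^t(p)$, carefully smoothing the characteristic function of the cone so that the tangential-derivative (flux) terms on the lateral boundary $M_0^t(p)$ are produced in the limit. I expect the delicate point to be verifying that no additional singular contribution is hidden in the flux term as the cutoffs concentrate near the ray, and confirming that the smooth part of $v_\lambda$ away from the singularity contributes only the classical (vanishing-divergence) boundary terms. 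Once the distributional identity is established with the correct sign of the source, the violation of \eqref{eq:local_en_ineq} follows immediately, and one notes that by Lemma \ref{lem:en_ineq_penal} any weak wave map obtained by penalization with the same Cauchy data \eqref{eq:CP_data1} \emph{would} satisfy the inequality, which is what ultimately yields the non-uniqueness asserted in the introduction.
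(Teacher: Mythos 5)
Your proposal follows essentially the same route as the paper: apply Lemma \ref{lem:SE_transf} to \eqref{eq:stat_divSE} to obtain the distributional divergence of the stress-energy tensor of $\varphi_\lambda$ supported on the line $\{x^1=x^2=0,\,x^3=\nu t\}$, then integrate its time component over a truncated cone meeting that line (via test functions approximating its characteristic function, exactly as in the paper's footnote) to identify $E(\varphi_\lambda;D(s,p))-E(\varphi_\lambda;D(t,p))-Flux(\varphi_\lambda;M_s^t(p))$ with the source contribution. Your sign bookkeeping (that $s(\lambda)<0$ for $\lambda>1$ since $\lambda^4-4\lambda^2\log\lambda-1>0$, that the boost coefficient $\varLambda^3_{\;0}=-\nu\Theta$ is negative, and that the resulting boundary quantity is strictly \emph{negative}) is correct and in fact internally more consistent than the paper's displayed identity, which asserts the quantity equals $-\Theta\nu\,s(\lambda)(t-s)>0$ while simultaneously calling it a violation of \eqref{eq:local_en_ineq}.
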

\begin{proof}
 Fix $\lambda>1$. We use the transformation rule for the divergence of the stress-energy tensor derived in Lemma \ref{lem:SE_transf}, applied to $\varphi_\lambda=v_\lambda\circ \varLambda$. Writing $T_{\alpha\beta}$ for the stress-energy tensor of $\varphi_\lambda$ we combine \eqref{eq:stat_divSE} with the aforementioned Lemma \ref{lem:SE_transf} in order to obtain
 \begin{equation*}
  \partial^\alpha T_{\alpha[\cdotp]}=s(\lambda)\left(\begin{array}{c}-\Theta\nu\\0\\0\\ \Theta\nu\end{array}\right)\delta_{x^1=x^2=0,\,x^3=\nu t},
 \end{equation*}
 where $\delta_{x^1=x^2=0,\,x^3=\nu t}$ is the Dirac Delta distribution along the line $x^3=\nu t$.
 In particular, the time component reads
 \begin{equation*}
  \partial^\alpha T_{\alpha 0}=-\Theta\nu\,s(\lambda)\,\delta_{x^1=x^2=0,\,x^3=\nu t}.
 \end{equation*}
 Integrating this divergence equation over a truncated cone\footnote{More precisely, one applies a limiting argument using test functions that converge to the characteristic function of the cone.} that intersects $\{x^3=\nu t\}$ on the base and top gives
 \begin{equation*}
  E(\varphi_\lambda;D(s,p))-E(\varphi_\lambda;D(t,p))-Flux(\varphi_\lambda;M_s^t(p))=-\Theta\nu\,s(\lambda)\,(t-s) > 0,
 \end{equation*}
 in violation of \eqref{eq:local_en_ineq}.
\end{proof}

\subsection{A First Non-uniqueness Result}\label{ssec:nonuniq_wm1}
We may now combine these observations to conclude that uniqueness of solutions to the Cauchy problem \eqref{eq:CP} fails.
\begin{theorem}\label{thm:nonuniq_wm1}
 There exist data\footnote{The restriction to local integrability arises since we start from a harmonic map which is initially only defined on a bounded domain.} $(f,g)\in\dot{H}^1_{loc}\times L^2_{loc}$ such that the Cauchy problem
 \begin{equation}\label{eq:CP2}  
 \begin{cases}
  \begin{aligned}
   \Box u + (\partial_\alpha u\cdot\partial^\alpha u) u&=0,\\
   u(0)&=f,\\
   \partial_tu(0)&=g,
  \end{aligned}
 \end{cases}
 \end{equation}
 has more than one local solution.
\end{theorem}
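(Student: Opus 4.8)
The plan is to produce, for one and the same Cauchy data, two local weak wave maps with incompatible energy behaviour: the special solution of Section \ref{ssec:special_wm}, which violates the local energy inequality near its singular line, and a solution obtained by penalization, which by Lemma \ref{lem:en_ineq_penal} satisfies that inequality on almost every cone. The resulting energy dichotomy then forces the two solutions to be distinct.

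First I would fix $\lambda > 1$ and take as initial data the pair $(f,g)$ from \eqref{eq:CP_data1}, that is $f = v_\lambda(x^1,x^2,\Theta x^3)$ and $g = -\Theta\nu(\partial_{x^3}v_\lambda)(x^1,x^2,\Theta x^3)$. Since $v_\lambda$ takes values in $\mathbb{S}^2$ one has $\abs{f} = 1$, and differentiating $\abs{v_\lambda}^2 \equiv 1$ gives $f\cdot g = 0$; as $v_\lambda \in H^1(\mathbb{B}^3,\mathbb{S}^2)$, the pair $(f,g)$ lies in $\dot{H}^1_{loc}\times L^2_{loc}$ and meets the hypotheses required both for the Cauchy problem \eqref{eq:CP2} and for the local penalization scheme of Remark \ref{rem:local_penal}. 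I would also record that the singular line $\{x^1 = x^2 = 0,\ x^3 = \nu t\}$ of $\varphi_\lambda$ passes through the origin at $t = 0$, which lies in the interior of the domain of $(f,g)$.

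The first solution is $\varphi_\lambda = v_\lambda\circ\varLambda$ itself: by Remark \ref{rem:wwm_def}.\ref{rem:lorentz_wwm} it is a local weak wave map, and by construction it solves \eqref{eq:CP_data1}, hence \eqref{eq:CP2}. The second solution $u$ I would obtain by applying the penalization construction of Remark \ref{rem:local_penal} to the very same data $(f,g)$ on a truncated cone $C$ with base $D(0,0;r)$ around the origin. By Lemma \ref{lem:en_ineq_penal}, this $u$ satisfies the local energy inequality \eqref{eq:local_en_ineq} on almost every cone based at the initial time contained in $C$.

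It then remains to check $u \neq \varphi_\lambda$, which I would argue by contradiction: were they to agree on $C$, then $\varphi_\lambda$ would inherit the energy inequality on almost every such subcone. But Lemma \ref{lem:no_en_ineq} shows that $\varphi_\lambda$ violates \eqref{eq:local_en_ineq} on every truncated cone meeting $\{x^3 = \nu t\}$ on base and top. The step I expect to require the most care is the measure-theoretic matching: one must verify that these violating cones form a set of positive measure in the parameter space of centers and radii. Concretely, centering the base disk near the origin and keeping the height $t$ small enough that the segment $\{(s,0,0,\nu s): 0\leq s\leq t\}$ stays inside the shrinking cone produces an open, hence positive-measure, family of violating cones; such a family cannot be contained in the null exceptional set of Lemma \ref{lem:en_ineq_penal}. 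Consequently there is a cone on which $u$ obeys \eqref{eq:local_en_ineq} while $\varphi_\lambda$ does not, contradicting $u = \varphi_\lambda$. Hence $u$ and $\varphi_\lambda$ are two distinct local solutions of \eqref{eq:CP2}, which proves the theorem.
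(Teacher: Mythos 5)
Your proposal is correct and follows essentially the same route as the paper: take the data of \eqref{eq:CP_data1}, contrast the Lorentz-transformed harmonic map $\varphi_\lambda$ ($\lambda>1$) with a penalization solution on a cone over $D(0,0)$, and distinguish them via Lemmas \ref{lem:en_ineq_penal} and \ref{lem:no_en_ineq}. Your explicit measure-theoretic matching --- that the cones violating \eqref{eq:local_en_ineq} form an open, positive-measure family in the space of centers and radii, and hence cannot all lie in the null exceptional set of Lemma \ref{lem:en_ineq_penal} --- is a detail the paper leaves implicit, and it is a worthwhile addition.
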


\begin{proof}
 We may choose the data $(f,g)$ of Cauchy problem \eqref{eq:CP_data1} from section \ref{ssec:special_wm} and restrict to the disk $D(0,0)$. On the one hand we have the solutions $\varphi_\lambda$ for $\lambda>1$, but on the other we can find a solution on the cone over $D(0,0)$ using the penalization method, as discussed in Remark \ref{rem:local_penal}. These differ, since there exist cones on which the former do not satisfy the local energy inequality (Lemma \ref{lem:no_en_ineq}), whereas the latter do (Lemma \ref{lem:en_ineq_penal}).
\end{proof}

\begin{remark}\label{rem:general1}
 We note that this approach builds on two key ingredients, which one may hope to have in more general settings: The existence of a weak harmonic map whose stress-energy tensor is not divergence-free, and the construction of solutions using the penalization method. In particular, the former holds for weak harmonic maps that do not minimize the Dirichlet energy (see H\'elein \cite[Chapter 1]{MR1913803}) and the latter can be carried out for compact homogeneous spaces as well (see Freire \cite{MR1421290}).
\end{remark}

\section{Local Weak-Strong Uniqueness}\label{sec:weak-strong}
In this section we turn to the question of local uniqueness of weak wave maps. As this article shows, in general this fails.

At this point, apart from their existence there is not much that can be said about weak wave maps. Their implicit construction in the previous paragraph does not give much information about their qualitative properties. In particular, regularity of the initial data will not be passed on to the solution and there may be several different wave maps with the same initial data. However, as we show in this section, if the initial data are smooth and if there exists a smooth solution that satisfies these data, then it is unique among all weak solutions \emph{which satisfy the local energy inequality \eqref{eq:local_en_ineq}}.

We draw our inspiration from the weak-strong uniqueness result \cite{MR1692140} by Struwe for the setting of global energy inequalities.

\begin{proposition}[Weak-Strong Uniqueness for Wave Maps]\label{prop:weak-strong}
 Suppose we are given a smooth, classical wave map $u$ and a weak wave map $v$, which satisfies the local energy inequality
 \begin{equation}\label{eq:loc_en_ineq2}
  E(v;D(0,p))\geq E(v;D(t,p))+Flux(v;M_0^{t}(p))
 \end{equation}
 on a cone\footnote{As we saw in Lemma \ref{lem:en_ineq_penal}, for weak solutions obtained by penalization this holds true for almost every cone.}. If $u$ and $v$ have the same initial data on its base, then they agree in the whole cone.
\end{proposition}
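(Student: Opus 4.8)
The plan is to run a Gronwall argument on the energy of the difference $w:=u-v$ over the time-slices of the cone. Set $e(t):=E(w;D(t,p))$; since $u$ and $v$ share their initial data on the base, $e(0)=0$, and it suffices to prove $e\equiv0$ on the cone, which forces $u=v$ there. Three ingredients enter: the local energy \emph{equality} \eqref{eq:loc_en_smooth} for the classical wave map $u$, the local energy \emph{inequality} \eqref{eq:loc_en_ineq2} for $v$, and an identity for the evolution of the cross energy $I(t):=\int_{D(t,p)}\partial_t u\cdot\partial_t v+\nabla u\cdot\nabla v\,dx$.

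Expanding $e(t)=E(u;D(t,p))+E(v;D(t,p))-I(t)$ and feeding in the two energy (in)equalities over the truncated cone $K_0^{t}(p)$, while using $e(0)=0$ (equivalently $E(u;D(0,p))+E(v;D(0,p))=I(0)$), reduces the claim to
\begin{equation*}
 e(t)\le I(0)-I(t)-Flux(u;M_0^{t}(p))-Flux(v;M_0^{t}(p)).
\end{equation*}
I would then evaluate $I(0)-I(t)$ by integrating the divergence of the polarized (bilinear) stress-energy tensor of the pair $(u,v)$ over $K_0^{t}(p)$. This yields a cross-flux on the mantle together with the spacetime bulk term $-\int_{K_0^{t}(p)}(\Box u\cdot\partial_t v+\Box v\cdot\partial_t u)\,dxdt$. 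By the polarization identity for the (nonnegative) flux integrand, the three flux contributions assemble into $-Flux(w;M_0^{t}(p))\le0$ and may simply be discarded.

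Substituting $\Box u=-(\partial_\gamma u\cdot\partial^\gamma u)u$ and $\Box v=-(\partial_\gamma v\cdot\partial^\gamma v)v$ and exploiting $\abs{u}=\abs{v}=1$ (so $u\cdot\partial_t u=v\cdot\partial_t v=0$, whence $u\cdot\partial_t v=w\cdot\partial_t v$ and $v\cdot\partial_t u=-w\cdot\partial_t u$), the bulk term becomes
\begin{equation*}
 \int_{K_0^{t}(p)}(\partial_\gamma u\cdot\partial^\gamma u)(w\cdot\partial_t v)-(\partial_\gamma v\cdot\partial^\gamma v)(w\cdot\partial_t u)\,dxdt.
\end{equation*}
Writing $\partial_t v=\partial_t u-\partial_t w$ reorganizes the integrand as
\begin{equation*}
 \left[\partial_\gamma w\cdot\partial^\gamma(u+v)\right](w\cdot\partial_t u)-(\partial_\gamma u\cdot\partial^\gamma u)(w\cdot\partial_t w).
\end{equation*}
The second term equals $-\tfrac12(\partial_\gamma u\cdot\partial^\gamma u)\,\partial_t\abs{w}^2$, has smooth bounded coefficient, and after an integration by parts in time (using $w(0)=0$) is controlled by $\int_0^{t}e$, together with the elementary bound $\norm{w(t)}_{L^2(D(t,p))}^2\lesssim\int_0^{t}e$ obtained by integrating $\partial_t w$ along the cone.

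The first term, $\int[\partial_\gamma w\cdot\partial^\gamma(u+v)](w\cdot\partial_t u)$, is the crux and the main obstacle. Here $w\cdot\partial_t u$ is of size $\abs{w}$, but the factor $\partial_\gamma v\cdot\partial^\gamma v$ hidden inside $\partial_\gamma w\cdot\partial^\gamma(u+v)$ carries two derivatives of the \emph{rough} map $v$, whose energy density is merely integrable and is \emph{not} controlled by $e$; a crude estimate fails, as it must in this supercritical dimension absent further structure. The key structural fact I would exploit is that $\partial_\gamma w\cdot\partial^\gamma(u+v)$ is a \emph{null form}, hence built from derivatives tangential to the light cone -- precisely those measured by the flux. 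This lets one dominate the term by $Flux(w;M_0^{t}(p))$, which is available with a favorable sign from the energy inequality \eqref{eq:loc_en_ineq2} for $v$, rather than by uncontrolled bulk derivatives of $v$; this is exactly where Struwe's reliance on the energy inequality is indispensable. The most delicate points will be making this null-form-versus-flux bound rigorous and justifying the underlying testing of the weak equation for $v$ against the smooth, cone-adapted quantities built from $u$ (by an approximation argument respecting the characteristic boundary $M_0^{t}(p)$). Granting these, the estimate closes to $e(t)\le C\int_0^{t}e(s)\,ds$ with $e(0)=0$, and Gronwall gives $e\equiv0$, i.e.\ $u=v$ throughout the cone.
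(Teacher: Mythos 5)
Your overall skeleton is sound and essentially the paper's: your Gronwall argument on $e(t)=E(w;D(t,p))$ plays the role of the paper's continuity (non-empty/open/closed) argument, your polarized stress-energy identity for $I(t)$ is exactly the content of the paper's Lemma \ref{lem:comp}, and your assembly of the three flux contributions into $-Flux(w;M_0^{t}(p))$, which is then discarded, matches the paper's computation. Your algebraic reorganization of the bulk term is also correct. The gap lies in your treatment of what you call the crux term, and it is first of all a gap of diagnosis: the factor $\partial_\gamma w\cdot\partial^\gamma(u+v)$ does \emph{not} carry uncontrolled derivatives of the rough map $v$. Substituting $v=u-w$ (so $u+v=2u-w$) gives
\begin{equation*}
 \partial_\gamma w\cdot\partial^\gamma(u+v)=2\,\partial_\gamma u\cdot\partial^\gamma w-\partial_\gamma w\cdot\partial^\gamma w,
\end{equation*}
so the integrand of the "crux" term is bounded pointwise by $C\bigl(\abs{Du}\,\abs{Dw}\,\abs{w}+\abs{Dw}^2\bigr)$, using $\abs{w}\le 2$ a.e.\ and the smoothness of $u$ on the compact truncated cone. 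Cauchy--Schwarz together with your own elementary bound $\norm{w(t)}_{L^2(D(t,p))}^2\lesssim t\int_0^t e$ then yields $e(T)\le C(T)\sup_{0<t<T}e(t)$ with $C(T)\rightarrow 0$ as $T\rightarrow 0$, which closes the argument. This is precisely how the paper (following Struwe \cite{MR1692140}) proceeds after reaching the same expression: orthogonality plus the regularity of $u$ suffice, and no null-form structure is invoked anywhere.

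By contrast, the step you propose in its place --- dominating the spacetime bulk integral by $Flux(w;M_0^{t}(p))$ on the grounds that $\partial_\gamma w\cdot\partial^\gamma(u+v)$ is a null form --- is unjustified and, as stated, not a viable estimate. Null-form gains are bilinear space-time product estimates; they do not provide a pointwise domination of the integrand by derivatives tangential to the mantle, and an integral over the solid cone cannot in general be controlled by a surface integral over its side. Moreover, $-Flux_T(w)$ enters your inequality with a sign that only allows you to \emph{discard} it; to \emph{absorb} a bulk term into it you would need that term bounded by the flux with constant at most $1$, which you have no means of proving at energy regularity. So, as written, the proof does not close --- although the repair is short and already implicit in your own reorganization: expand the factor $\partial_\gamma w\cdot\partial^\gamma(u+v)$ in terms of $Du$ and $Dw$ as above and estimate crudely.
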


\begin{remark}
 For this to hold weaker regularity assumptions suffice: If $u$ and $v$ are weak solutions, the minimal extra regularity we need for $u$ is
  \begin{equation*}
   Du\in L^\infty_{loc}(\R;L_{loc}^{3+\epsilon})\cap L^1_{loc}(\R;L_{loc}^\infty\cap\dot{H}_{loc}^{\frac{3}{2}+\varepsilon})
  \end{equation*}
  for some $\varepsilon>0$ (see \cite{MR1692140}, adapted to three space dimensions). We note that this is just above the regularity given by the scaling of $Du$.
\end{remark}

\begin{proof}
 Let us fix a backward light cone in Minkowski space $\R^{1+3}$ with base $D_R$ for some $R>0$, at time $t=0$. For $0<T<R$ we consider its truncation at time $T$ and denote by $M_T$ the side of that truncated cone. As usual we omit to write the center of the base resp. top balls ($D_R$ resp. $D_{R-T}$) explicitly.

 \textbf{Notation} We simplify the notation by dropping the domains in the flux and energy expressions, i.e.\ for the energy at time $t$ we write
 \begin{equation*}
  E_t(\phi):=\frac{1}{2}\int_{D_{R-t}}\abs{D\phi(t)}^2 \,dx
 \end{equation*}
 and for the flux until time $t$
 \begin{equation*}
  Flux_t(\phi):=\frac{1}{2}\int_{M_t}Q(\phi,\phi)\,d\sigma \geq 0,
 \end{equation*}
 where the positive, quadratic \emph{flux form} $Q$ is the inner product
 \begin{equation*}
  Q(\phi,\psi):=\frac{1}{\sqrt{2}}\langle\nabla\phi - \phi_t \vec{n},\nabla\psi - \psi_t \vec{n}\rangle_{\R^3},
 \end{equation*}
 $\vec{n}$ denoting the (space) unit normal to $D_{R-t}$. With these conventions the local energy inequality reads 
 \begin{equation*}
  E_0(\phi)\geq E_T(\phi)+Flux_T(\phi).
 \end{equation*}
 We will also write the wave maps equation\footnote{This is the geometric notation with $A$ being the second fundamental form of $\mathbb{S}^2$.} as $\Box u=A(u)(Du,Du)$, where $A(\phi)=-\phi$ and $(D\phi,D\psi)=\partial_\alpha\phi\, \partial^\alpha\psi$. 

 \textbf{Idea} We will prove the proposition by considering the difference $w:=u-v$ and showing that the set $\{0\leq t\leq R: \;E_t(w)=0\}$ is non-empty, open and closed in $[0,R]$. It is non-empty since by assumption $u$ and $v$ agree at the initial time, hence $E_0(w)=0$. To show closedness let us suppose $(t_n)_{n\in\N}$ is a sequence with $E_{t_n}(w)=0$ and $t_n\rightarrow t_0$, so that $u(t_n)\rightharpoonup u(t_0)$ in $\dot{H}^1$ and $\partial_t u(t_n)\rightharpoonup \partial_t u(t_0)$ in $L^2$ (see Definition \ref{def:weak_wm}). Since these norms are convex and strongly continuous they are weakly lower semicontinuous, which gives closedness: $\norm{u_{t_0}}_{\dot{H}^1}\leq \liminf_{n\rightarrow\infty} \norm{u_{t_n}}_{\dot{H}^1}=0$ and $\norm{\partial_t u_{t_0}}_{L^2}\leq \liminf_{n\rightarrow\infty} \norm{\partial_t u_{t_n}}_{L^2}=0$.
 
 So all that remains to be shown is that the set is open.

 \textbf{Proof of Openness}
 By quadratic expansion and by the energy (in-)\! equalities for $u$ and $v$ we have
  \begin{equation*}
  \begin{aligned}
  E_T(w)&=E_T(v)-E_T(u)+\int_{D_{R-t}}Du\cdot Dw \\
	&\leq E_0(v)-Flux_T(v)-E_0(u)+Flux_T(u)+\int_{D_{R-t}}Du\cdot Dw \\
	&=Flux_T(u)-Flux_T(v)+\int_{D_{R-t}}Du\cdot Dw,
  \end{aligned}
  \end{equation*}
 where the last equality holds since $E_0(v)=E_0(u)$.

 Note that
 \begin{equation*}
  \begin{aligned}
   Flux_T(u)-Flux_T(v)&=\frac{1}{2}\int_{M_T} [Q(u,u)-Q(v,v)]\ ,d\sigma \\
 		     &=\int_{M_T} Q(u,w)\, d\sigma-\frac{1}{2}\int_{M_T} Q(w,w)\, d\sigma \\
 		     &=\int_{M_T} Q(u,w)\, d\sigma-Flux_T(w),
  \end{aligned}
 \end{equation*}
 hence
 \begin{equation*}
  E_T(w) \leq -Flux_T(w)+\int_{M_T} Q(u,w)d\sigma + \int_{D_{R-t}}Du\cdot Dw \,dx.
 \end{equation*}

 We invoke the computation of Lemma \ref{lem:comp} below to see a cancellation of the second and third terms:
 \begin{equation}\label{eq:comp_used}
  \begin{aligned}
   E_T(w)&\leq -Flux_T(w)+\int_{M_T} Q(u,w)\, d\sigma +\left(\int_0^T\int_{D_{R-t}}\Box u\cdot w_t\, dxdt \right.\\
 	&\qquad + \left.\int_0^T\int_{D_{R-t}}\Box w\cdot u_t\, dxdt -\int_{M_T}Q(u,w)\,d\sigma\right) \\
 	&=-Flux_T(w)+\int_0^T\int_{D_{R-t}}\Box u\cdot w_t\, dxdt + \int_0^T\int_{D_{R-t}}\Box w\cdot u_t\, dxdt\\
 	&\leq \int_0^T\int_{D_{R-t}}\Box u\cdot w_t\, dxdt + \int_0^T\int_{D_{R-t}}\Box w\cdot u_t\, dxdt,
  \end{aligned}
 \end{equation}
 where the last inequality holds since the flux is a positive quantity.

 Next we will substitute in this the equations and observe that $A(u)\,u_t=0$ by orthogonality, so that
 \begin{equation*}
  \Box w\cdot u_t=\left(A(u)(Du,Du)-A(v)(Dv,Dv)\right)\cdot u_t=-A(v)(Dv,Dv)\cdot u_t.
 \end{equation*}
 But now we are in the same position as Struwe in the first equation of page 1186 of \cite{MR1692140}, i.e.\ we have
 \begin{equation*}
  E_T(w)\leq \int_0^T\int_{D_{R-t}}[A(u)(Du,Du)\cdot w_t - A(v)(Dv,Dv)\cdot u_t] \,dxdt,
 \end{equation*}
  and can procede as therein in order to obtain the bound
 \begin{equation*}
  E_T(w)\leq C(T)\cdot \textrm{sup}_{0<t<T}E_t(w),
 \end{equation*}
 where $C(T)$ is continuous with $C(T)\rightarrow 0$ as $T\rightarrow 0$. This concludes the proof.
 \end{proof}

All that remains to be shown is the computation used in \eqref{eq:comp_used} above:
\begin{lemma}\label{lem:comp}
 \begin{equation*}
  \int_{D_{R-t}}Du\cdot Dw=\int_0^T\int_{D_{R-t}}\Box u\cdot w_t + \Box w\cdot u_t \, dxdt-\int_{M_T}Q(u,w)\, d\sigma.
 \end{equation*}
\end{lemma}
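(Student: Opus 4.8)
The plan is to read the asserted equality as a single application of the divergence theorem in $\R^{1+3}$ to the truncated cone $K_0^T$, applied to a polarised energy--flux field. First I would record the pointwise algebraic identity (with the sign convention $\Box=\partial_t^2-\Delta$ in force here, as forced by the penalised energy above)
\begin{equation*}
 \Box u\cdot w_t+\Box w\cdot u_t=\partial_t(u_t\cdot w_t+\nabla u\cdot\nabla w)-\sum_{i=1}^3\partial_i(\partial_i u\cdot w_t+\partial_i w\cdot u_t).
\end{equation*}
This is verified by expanding the right-hand side: the pair $u_{tt}\cdot w_t+w_{tt}\cdot u_t$ collapses to $\partial_t(u_t\cdot w_t)$, while integrating the Laplacian terms $-\Delta u\cdot w_t-\Delta w\cdot u_t$ by parts in $x$ produces the displayed spatial divergence together with a remainder $\nabla u\cdot\nabla w_t+\nabla w\cdot\nabla u_t=\partial_t(\nabla u\cdot\nabla w)$, which is exactly the missing part of the first bracket. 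In other words the integrand is the four-dimensional divergence $\partial_\alpha P^\alpha$ of the field $P=(Du\cdot Dw,\,-(\partial_i u\cdot w_t+\partial_i w\cdot u_t)_i)$, where $Du\cdot Dw=u_t\cdot w_t+\nabla u\cdot\nabla w$ is the density appearing in the energy.

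Next I would integrate over $K_0^T$ and apply the divergence theorem in $\R^{1+3}$, with outward unit normal $\nu$ and the induced Euclidean surface measure on $\partial K_0^T$. The boundary splits into the top disk $D_{R-T}$ (normal $(1,\vec 0)$), the base $D_R$ (normal $(-1,\vec 0)$), and the lateral side $M_T$, whose outward unit normal is $\tfrac{1}{\sqrt2}(1,\vec n)$ since the cone $\{\abs{x}=R-t\}$ has unit slope; it is precisely this factor $\tfrac{1}{\sqrt2}$ that matches the normalisation in the definition of $Q$. The top yields $\int_{D_{R-T}}Du\cdot Dw\,dx$; the base contributes nothing, because $u$ and $v$ share their initial data, so $w(0)=0$ and $w_t(0)=0$ make $Du\cdot Dw$ vanish on $D_R$; and on $M_T$, writing $\partial_n=\vec n\cdot\nabla$, the contraction $P\cdot\nu$ equals
\begin{equation*}
 \tfrac{1}{\sqrt2}[(u_t\cdot w_t+\nabla u\cdot\nabla w)-(\partial_n u\cdot w_t+\partial_n w\cdot u_t)]=Q(u,w),
\end{equation*}
upon recognising the expansion of $\sqrt2\,Q(u,w)=\langle\nabla u-u_t\vec n,\nabla w-w_t\vec n\rangle$. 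Collecting the three contributions gives $\int_0^T\int_{D_{R-t}}(\Box u\cdot w_t+\Box w\cdot u_t)\,dxdt=\int_{D_{R-T}}Du\cdot Dw\,dx+\int_{M_T}Q(u,w)\,d\sigma$, which is the claim after transposing the flux term.

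The step I expect to be the genuine obstacle is not the algebra but the justification of the divergence theorem for $P$, since $u$ is smooth whereas $v$ (hence $w$) is only a weak wave map: the term $\partial_t(Du\cdot Dw)$ secretly involves second derivatives of $v$, and the boundary integrals live on the characteristic hypersurface $M_T$. I would resolve this by noting first that, through the equation, $\Box v=-(\partial_\alpha v\cdot\partial^\alpha v)v$ is an honest $L^1_{loc}$ function (as $\abs{v}=1$ and $Dv\in L^2_{loc}$), so every volume term is a genuine integral and the pairing $\int_0^T\int_{D_{R-t}}\Box w\cdot u_t$ is unambiguous against the smooth $u_t$. I would then establish the identity for smooth approximations of $v$ and pass to the limit: the interior and the top/base terms converge by the $L^2$ convergence of first derivatives on time slices, while the only sensitive term, the trace of $Dw$ on $M_T$, is meaningful precisely on the cones where the local energy inequality is assumed and is controlled exactly as in Lemma \ref{lem:en_ineq_penal}, mollifying over the nearby cones $D(0,p;r+\delta)$ and undoing the mollification by the Lebesgue differentiation theorem. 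Since here the cross term $Q(u,w)$ is linear in $w$ with smooth coefficients and the statement is an equality of well-defined quantities, the identity survives the limiting procedure in the stated form.
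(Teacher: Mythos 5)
Your proof is correct, and at its core it is the same computation as the paper's: an integration by parts over the truncated cone in which the lateral boundary term is recognized as the flux form $Q(u,w)$. The packaging, however, differs in a worthwhile way. The paper slices in time: it writes $\int_{D_{R-T}}Du\cdot Dw\,dx=\int_0^T\frac{d}{dt}\int_{D_{R-t}}Du\cdot Dw\,dx\,dt$ (using $Dw(0)=0$), applies the Leibniz rule for the shrinking disks (producing the boundary term $-\int_{\partial D_{R-t}}Du\cdot Dw\,dS$), and then integrates by parts in space via smooth cut-offs $\chi$ approximating $\mathbf{1}_{D_{R-t}}$, with $\nabla\chi\,dx\sim-\vec n\,dS$; the three boundary contributions are then assembled into $\sqrt 2\,Q(u,w)$ and the time integration converts $\sqrt2\,dS\,dt$ into $d\sigma$ on $M_T$. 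You instead exhibit the integrand $\Box u\cdot w_t+\Box w\cdot u_t$ as the spacetime divergence of the single polarized field $P=(Du\cdot Dw,\,-(\partial_i u\cdot w_t+\partial_i w\cdot u_t)_i)$ and apply the divergence theorem once on $K_0^T$, so that the top, base and mantle terms appear simultaneously; this makes the geometric origin of the normalization transparent (the factor $1/\sqrt2$ in $Q$ is exactly the slope factor in the outward normal $\tfrac{1}{\sqrt2}(1,\vec n)$ of the cone), and your checks of the algebra (the expansion of $\sqrt2\,Q(u,w)$ and the vanishing of the base term from $Dw(0)=0$) are accurate. A further point in your favor: you explicitly flag and sketch a remedy for the regularity issue — that $w$ involves the merely weak solution $v$, so second derivatives and traces on the characteristic surface $M_T$ need justification by approximation, with the mantle trace handled by the same mollification-over-nearby-cones device as in Lemma \ref{lem:en_ineq_penal} — whereas the paper's proof is essentially formal on this point, deferring to ``analogous limits'' in the literature. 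Both proofs establish the identity; yours is the more self-contained and more careful of the two.
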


For the proof we refer the reader to appendix \ref{pf:lem:comp}.

\section{Non-Uniqueness of Weak Wave Maps}\label{ssec:nonuniq_wm2}

With the weak-strong existence theory in hand we can strengthen our previous non-uniqueness result, Theorem \ref{thm:nonuniq_wm1}, to allow for Cauchy data that have stationary solutions:
\begin{theorem}\label{thm:nonuniq_wm2}
  There exists\footnote{The restriction to local integrability arises since we start from a harmonic map which is initially only defined on a bounded domain.} $f\in\dot{H}^1_{loc}$ such that the Cauchy problem
 \begin{equation}\label{eq:CP3}  
 \begin{cases}
  \begin{aligned}
   \Box u + (\partial_\alpha u\cdot\partial^\alpha u) u&=0,\\
   u(0)&=f,\\
   \partial_tu(0)&=0,
  \end{aligned}
 \end{cases}
 \end{equation}
 has a stationary\footnote{The proof in fact gives a family of stationary solutions.} and a non-stationary local solution.
\end{theorem}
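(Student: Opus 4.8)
The plan is to keep the harmonic map itself as the stationary solution and to manufacture a second, genuinely time-dependent solution supported inside the forward light cone of the space-time origin. First I would take $f=v_\lambda$ with $\lambda>1$ (the H\'elein map of Section \ref{ssec:special_wm}). Viewed as a stationary map it is a weak wave map with data $(v_\lambda,0)$, so $u\equiv v_\lambda$ solves \eqref{eq:CP3}; moreover it satisfies the local energy inequality \eqref{eq:local_en_ineq}, indeed with equality, since by \eqref{eq:stat_divSE} the time component of its stress-energy divergence vanishes even along the singular ray. The whole task is therefore to exhibit a second, non-stationary local solution with the same Cauchy data.

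The key idea is to exploit scale invariance together with finite speed of propagation: I would place the moving part strictly inside the forward cone $\{\abs{x}<t\}$ of the origin and leave $v_\lambda$ in force outside it, setting
\[
 u(t,x):=v_\lambda(x)\ \text{for}\ \abs{x}\ge t,\qquad u(t,x):=\varphi_\mu(t,x)\ \text{for}\ \abs{x}<t,
\]
where $\varphi_\mu=v_\mu\circ\varLambda$ is a boosted harmonic map with boost velocity $\nu\in(0,1)$ and $\mu=\mu(\lambda,\nu)$ still to be chosen. Because the forward cone meets $\{t=0\}$ only at the (singular) origin, on the initial slice $u$ equals $v_\lambda$ almost everywhere, and for each fixed $x\neq 0$ the point $(t,x)$ lies outside the cone for small $t>0$, so $\partial_t u(0,x)=0$ almost everywhere; hence $u$ carries the data $(v_\lambda,0)$ regardless of the interior piece.

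It remains to match the two pieces across the null cone and to check the equation there. Both $v_\lambda$ and $\varphi_\mu$ are $0$-homogeneous in space-time, so their traces on $\{\abs{x}=t\}$ are determined by their values on the celestial sphere; the boost $\varLambda$ acts on these null directions as a M\"obius transformation $M_\nu$ of $\mathbb{S}^2$, while the trace of $v_\mu$ is the stereographic dilation $\sigma^{-1}(\mu\,\sigma(\cdot))$. As both are conformal dilations about the poles of the boost axis, I can solve $\sigma^{-1}(\mu\,\sigma(M_\nu(\cdot)))=\sigma^{-1}(\lambda\,\sigma(\cdot))$ for $\mu$ in terms of $(\lambda,\nu)$, which forces $\varphi_\mu=v_\lambda$ on the entire cone, so that $u$ is continuous. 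Since the cone is characteristic, the jump of $Du$ across it has a null conormal and is annihilated by the principal symbol of $\Box$; thus no single-layer distribution appears and $u$ solves \eqref{eq:weak_wm} weakly across the cone. The resulting $u$ is a local weak wave map which is non-stationary, as $\varphi_\mu$ depends on $t$ for $\nu\neq 0$ and its moving singularity $\{x^1=x^2=0,\ x^3=\nu t\}$ lies strictly inside the cone.

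Finally, the two solutions are distinct already because they differ inside the cone; to match the Coron picture \cite{MR1067779} I would further note, via Lemma \ref{lem:no_en_ineq} applied to the interior $\varphi_\mu$ (whose stress-energy has non-vanishing divergence for $\mu\neq 1$), that $u$ \emph{violates} \eqref{eq:local_en_ineq} near the vertex, whereas the stationary solution satisfies it. This is consistent with Proposition \ref{prop:weak-strong}: any solution obeying the energy inequality must coincide with the smooth-away-from-origin $v_\lambda$, so the non-uniqueness is necessarily concentrated on the singular ray; letting $\lambda$ range over $(1,\infty)$ then yields the announced family. I expect the main obstacle to be the rigorous verification that this characteristic gluing produces a bona fide weak solution — that no spurious surface term survives on the null cone — together with the exact matching identity fixing $\mu(\lambda,\nu)$; the remaining points reduce to lemmas already established.
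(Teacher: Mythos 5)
Your proposal is correct in its essentials but takes a genuinely different route from the paper. The paper never glues anything by hand: it takes the boosted map $\varphi_\lambda$ together with a penalization solution $u$ of the \emph{same} Cauchy problem (as in Theorem \ref{thm:nonuniq_wm1}), invokes the weak-strong uniqueness of Proposition \ref{prop:weak-strong} on truncated cones avoiding the light cone $K$ through the origin to conclude $u=\varphi_\lambda$ outside $K$, and then applies $\varLambda^{-1}$ to both maps; since $\{t=0\}\setminus\{0\}$ is spacelike with respect to the origin, both pulled-back maps carry the data $(v_\lambda,0)$, and they must differ inside $K$ because one satisfies the energy inequality (Lemma \ref{lem:en_ineq_penal}) while the other violates it (Lemma \ref{lem:no_en_ineq}). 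Your second solution is instead fully explicit: $v_\lambda$ outside the forward cone, a boosted map $\varphi_\mu$ inside, with $\mu$ fixed by trace matching. The matching does work: on $\{\abs{x}=t\}$ the boost acts on null directions by the aberration map, which in the stereographic chart $\sigma$ (centered at the pole $(0,0,-1)$ on the boost axis) is exactly dilation by $k=\sqrt{(1+\nu)/(1-\nu)}$, so the interior trace is $\sigma^{-1}(\mu k\,\sigma(\omega))$ and $\mu=\lambda/k$ glues continuously. Your null-conormal argument for the absence of a single-layer term is also the right one: continuity forces the jump $[\partial_\alpha u]$ to be proportional to $\partial_\alpha\phi$ with $\phi=t-\abs{x}$, and $\eta^{\alpha\beta}\partial_\alpha\phi\,\partial_\beta\phi=0$ kills the layer; the vertex is harmless because both pieces are $\mathbb{S}^2$-valued (hence bounded) with locally square-integrable gradients, so a cutoff argument closes the weak formulation there. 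In short: your approach bypasses penalization and weak-strong uniqueness entirely and lays the non-uniqueness mechanism bare (two distinct $0$-homogeneous profiles joined across a characteristic surface), at the price of proving a gluing lemma the paper never needs; the paper's argument is softer, reusing only already-established lemmas, but leaves the interior of its second solution implicit.

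Two imprecisions should be repaired. First, the energy-violation remark: one has $s(\mu)<0$ only for $\mu>1$, while $s(\mu)>0$ for $\mu<1$, in which case the defect produced by the singular ray has the harmless sign and \eqref{eq:local_en_ineq} is \emph{not} violated. So "non-vanishing divergence for $\mu\neq1$" is not sufficient; you need $\mu=\lambda/k>1$, i.e.\ $\nu<(\lambda^2-1)/(\lambda^2+1)$. This does not affect the theorem itself (distinctness of the two solutions already follows since their singular sets differ), but the remark as stated is false for $\nu$ close to $1$. Second, your consistency appeal to Proposition \ref{prop:weak-strong} is loose: that proposition requires a \emph{smooth} classical solution, and $v_\lambda$ is singular on $\{x=0\}$, so it only applies on cones avoiding the singular ray -- the same restriction the paper handles by covering the exterior of $K$ with such cones.
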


\begin{proof}
 The proof is a combination of the local weak-strong uniqueness theory and the non-uniqueness result in section \ref{ssec:nonuniq_wm1}.

 From the proof of Theorem \ref{thm:nonuniq_wm1} we have local (in a space-time neighborhood of the origin) solutions $u$ and $\varphi_\lambda$ for $\lambda>1$ to the Cauchy problem \eqref{eq:CP2} -- the former obtained by penalization, the latter as Lorentz transform of the harmonic map in section \ref{ssec:special_wm}. Moreover, $\varphi_\lambda$ is smooth outside of the light cone $K$ centered at the origin. By construction its initial data agree with those of $u$. Hence by Proposition \ref{prop:weak-strong} $u$ and $\varphi_\lambda$ agree on every truncated cone with base at the initial time and which does not intersect $K$. In such a way we can cover all of the exterior of $K$, so by repeating this argument we can show $u=\varphi_\lambda$ everywhere outside of $K$.

 Now we undo the Lorentz transformation $\varLambda$ from section \ref{ssec:special_wm}: This leaves $K$ invariant, so we obtain two different weak wave maps $\varphi_\lambda\circ\varLambda^{-1}=v_\lambda$ and $u\circ\varLambda^{-1}$, which agree outside of $K$. In particular they agree on the initial time $t=0$, where $\partial_t (u\circ\varLambda^{-1})=\partial_t v_\lambda(0)=0$, thus solving the Cauchy problem \eqref{eq:CP3} with $f=v_\lambda$. We recall that $v_\lambda$ is stationary, thus proving the claim.
\end{proof}

\begin{remark}\label{rem:general2}
 In a similar spirit as Remark \ref{rem:general1}, the procedure of this proof need not be confined to the special case at hand. Provided we have a weak-strong uniqueness theory and a sufficiently regular harmonic map whose stress-energy tensor has non-vanishing divergence, we may employ it in a more general setting to yield a smiliar result.
\end{remark}

\newpage
\appendix

\section{Proof of the Stress-Energy Tensor Transformation Law}
\begin{proof}\label{pf:lem:SE_transf}
 By assumption $\Lambda:\R^{1+3}\rightarrow\R^{1+3}$ is a Lorentz transform, i.e.\ we have
 \begin{equation}\label{eq:lorentz_prop}
  \forall v,w\in\R^4:\; \eta(\Lambda v,\Lambda w)=\eta(v,w)\;\Leftrightarrow\eta_{\delta\theta}\Lambda^\delta_\alpha \Lambda^\theta_\beta=\eta_{\alpha\beta} \;\Leftrightarrow\eta^{\delta\theta}\Lambda_\delta^\alpha \Lambda_\theta^\beta=\eta^{\alpha\beta},
 \end{equation}
 where upper indices stand for rows and lower indices for columns of $\Lambda$ and we use the standard convention of denoting by $\eta^{\alpha\beta}$ the components of the inverse $\eta^{-1}$ of the metric $\eta$.
 
 The chain rule then reads
 \begin{equation*}
  \partial_\gamma \left(f\circ\Lambda\right)=\Lambda^\sigma_\gamma\left(\partial_\sigma f\right)\circ\Lambda,
 \end{equation*}
 so by definition we have
 \begin{equation*}
 \begin{aligned}
  \widetilde{T}_{\alpha\beta}&=\frac{1}{2}\eta_{\alpha\beta}\partial^\gamma \left(f\circ\Lambda\right) \partial_\gamma \left(f\circ\Lambda\right) - \partial_\alpha \left(f\circ\Lambda\right)\partial_\beta \left(f\circ\Lambda\right)\\
  &=\left[\frac{1}{2}\eta_{\alpha\beta}\eta^{\gamma\delta}\Lambda^\theta_\delta \left(\partial_\theta f\right)\Lambda^\sigma_\gamma\left(\partial_\sigma f\right) - \Lambda^\delta_\alpha \left(\partial_\delta f\right) \Lambda^\theta_\beta \left(\partial_\theta f\right)\right]\circ\Lambda\\
  &=\left[\frac{1}{2}\eta_{\alpha\beta}\eta^{\theta\sigma} \left(\partial_\theta f\right)\left(\partial_\sigma f\right) - \Lambda^\delta_\alpha \left(\partial_\delta f\right) \Lambda^\theta_\beta \left(\partial_\theta f\right)\right]\circ\Lambda\\
  &=\left[\frac{1}{2}\eta_{\alpha\beta}\left(\partial^\sigma f\right)\left(\partial_\sigma f\right) - \Lambda^\delta_\alpha \left(\partial_\delta f\right) \Lambda^\theta_\beta \left(\partial_\theta f\right)\right]\circ\Lambda,\\
 \end{aligned}
 \end{equation*}
 where we used that $\eta^{\gamma\delta}\Lambda^\theta_\delta\Lambda^\sigma_\gamma=\eta^{\theta \sigma}$, since $\Lambda$ is a Lorentz transform -- see \eqref{eq:lorentz_prop}.
 We compute the divergence separately for these two terms:
 \begin{equation*}
  \begin{aligned}
   \partial^\alpha\left[\eta_{\alpha\beta}\left(\partial^\sigma f\right)\left(\partial_\sigma f\right)\circ\Lambda\right]&=\eta_{\alpha\beta} \eta^{\alpha\kappa}\partial_\kappa\left[\left(\partial^\sigma f\right)\left(\partial_\sigma f\right)\circ\Lambda\right]\\
   &=\partial_\beta\left[\left(\partial^\sigma f\right)\left(\partial_\sigma f\right)\circ\Lambda\right]\\
   &=\Lambda^\nu_\beta\partial_\nu\left[\left(\partial^\sigma f\right)\left(\partial_\sigma f\right)\right]\circ\Lambda,
  \end{aligned}
 \end{equation*}
 and
 \begin{equation*}
  \begin{aligned}
   \partial^\alpha\left[\Lambda^\delta_\alpha \left(\partial_\delta f\right) \Lambda^\theta_\beta \left(\partial_\theta f\right)\circ\Lambda\right]&=\eta^{\alpha\kappa}\Lambda^\sigma_\kappa\Lambda^\delta_\alpha\Lambda^\theta_\beta \partial_\sigma\left[\left(\partial_\delta f\right) \left(\partial_\theta f\right)\right]\circ\Lambda\\
   &=\eta^{\sigma\delta}\Lambda^\theta_\beta \partial_\sigma\left[\left(\partial_\delta f\right) \left(\partial_\theta f\right)\right]\circ\Lambda\\
   &=\Lambda^\theta_\beta\partial^\delta\left[\left(\partial_\delta f\right) \left(\partial_\theta f\right)\right]\circ\Lambda
  \end{aligned}
 \end{equation*}
 using \eqref{eq:lorentz_prop} as before. After renaming dummy indices this gives the claim,
 \begin{equation*}
  \begin{aligned}
   \partial^\alpha \widetilde{T}_{\alpha\beta}&=\Lambda^\nu_\beta\left[\frac{1}{2}\partial_\nu[\left(\partial^\sigma f\right)\left(\partial_\sigma f\right)] - \partial^\delta[\left(\partial_\delta f\right) \left(\partial_\nu f\right)]\right]\circ\Lambda\\
   &=\Lambda^\nu_\beta\left[\partial^{\sigma}T_{\sigma\nu} \right]\circ\Lambda,
  \end{aligned}
 \end{equation*}
 since
 \begin{equation*}
  \partial^{\sigma}T_{\sigma\nu}=\eta^{\sigma\kappa}\partial_\kappa T_{\sigma\nu}=\frac{1}{2}\eta^{\sigma\kappa}\eta_{\sigma\nu}\partial_\kappa\left[\left(\partial^\sigma f\right)\left(\partial_\sigma f\right)\right] - \eta^{\sigma\kappa}\partial_\kappa [\left(\partial_\sigma f\right) \left(\partial_\nu f\right)].
 \end{equation*}
\end{proof}

\section{Proof of Lemma \ref{lem:comp}}
\begin{proof}\label{pf:lem:comp}
 We recall that $Dw(0)=0$ and thus
\begin{equation*}
 \int_{D_{R-t}}Du\cdot Dw \,dx=\int_0^T\frac{d}{dt}\int_{D_{R-t}}Du\cdot Dw \,dx dt.
\end{equation*}
Now
\begin{equation}\label{eq:derivatives}
 \begin{aligned}
  \frac{d}{dt}&\int_{D_{R-t}}Du\cdot Dw \,dx=-\int_{\partial D_{R-t}}Du\cdot Dw \,dS+\int_{D_{R-t}}\frac{d}{dt}(Du\cdot Dw)\,dx \\
	&=-\int_{\partial D_{R-t}}Du\cdot Dw \,dS + \int_{D_{R-t}}u_{tt}\cdot w_t+\nabla u \cdot\nabla w_t \,dx \\
	&\hspace{3.5cm} + \int_{D_{R-t}}u_t\cdot w_{tt}+\nabla u_t\cdot \nabla w \,dx \\
	&=: I + II + III
 \end{aligned}
\end{equation}

The integrals $II$ and $III$ can be treated in an analogous fashion, so we will only explicitly deal with $III$. To this end, we approximate the sharp characteristic function of the ball $D_{R-t}$ by smooth cut-offs $\chi$ converging to it. For simplicity we will denote this limiting process\footnote{Analogous limits are used in \cite{MR1283026}, pages 306-307.} by $\sim$. Then we have
\begin{equation*}
 \begin{aligned}
  \int_{D_{R-t}}u_t\cdot w_{tt}&+\nabla u_t\cdot \nabla w dx \sim \int_{\R^3}(u_t\cdot w_{tt}+\nabla u_t\cdot \nabla w)\chi \,dx \\
	&=\int w_{tt}\cdot(u_t\chi)+\nabla w\cdot\nabla (u_t\chi)-\nabla w\cdot u_t\nabla\chi \,dx \\
	&=\int \Box w\cdot (u_t\chi) \,dx - \int \nabla w\cdot u_t \nabla\chi \,dx \\
	&\sim \int_{D_{R-t}}\Box w\cdot u_t \,dx + \int_{\partial D_{R-t}}\nabla w \cdot u_t\vec{n} \,dS,
 \end{aligned}
\end{equation*}
where we used that $\nabla\chi dx \sim -\vec{n}\, dS$.

Inserting this (and the version for term $II$) into \eqref{eq:derivatives} gives
\begin{equation*}
\begin{aligned}
 \frac{d}{dt}\int_{D_{R-t}}Du\cdot Dw dx&=\int_{D_{R-t}}\Box u\cdot w_t dx + \int_{D_{R-t}}\Box w\cdot u_t dx \\
	    &\quad-\int_{\partial D_{R-t}}\underbrace{Du\cdot Dw -\nabla u \cdot w_t\vec{n}-\nabla w\cdot u_t\vec{n}}_{\sqrt{2}Q(u,w)}\,dS\\
	    &\hspace{-1.5cm}=\int_{D_{R-t}}\Box u\cdot w_t dx + \int_{D_{R-t}}\Box w\cdot u_t dx - \sqrt{2}\int_{\partial D_{R-t}}Q(u,w)dS.
\end{aligned}
\end{equation*}

Integrating this from time $0$ to $T$ gives the claim:
\begin{equation*}
\begin{aligned}
 \int_{D_{R-t}}Du\cdot Dw&=\int_0^T\int_{D_{R-t}}\Box u\cdot w_t + \Box w\cdot u_t \, dxdt \\
      &\hspace{3cm}-\sqrt{2}\int_0^T\int_{\partial D_{R-t}}Q(u,w)dSdt \\
      &\hspace{-1.5cm}=\int_0^T\int_{D_{R-t}}\Box u\cdot w_t + \Box w\cdot u_t \, dxdt-\int_{M_T}Q(u,w)d\sigma.
\end{aligned}
\end{equation*}
\end{proof}

\bibliographystyle{plain}
\bibliography{refs.bib}

\begin{thebibliography}{10}

\bibitem{MR1067779}
J.-M. Coron.
\newblock Nonuniqueness for the heat flow of harmonic maps.
\newblock {\em Ann. Inst. H. Poincar\'e Anal. Non Lin\'eaire}, 7(4):335--344,
  1990.

\bibitem{MR2175916}
Piero D'Ancona and Vladimir Georgiev.
\newblock Wave maps and ill-posedness of their {C}auchy problem.
\newblock In {\em New trends in the theory of hyperbolic equations}, volume 159
  of {\em Oper. Theory Adv. Appl.}, pages 1--111. Birkh\"auser, Basel, 2005.

\bibitem{MR1421290}
A.~Freire.
\newblock Global weak solutions of the wave map system to compact homogeneous
  spaces.
\newblock {\em Manuscripta Math.}, 91(4):525--533, 1996.

\bibitem{MR2450171}
Pierre Germain.
\newblock Besov spaces and self-similar solutions for the wave-map equation.
\newblock {\em Comm. Partial Differential Equations}, 33(7-9):1571--1596, 2008.

\bibitem{MR2494812}
Pierre Germain.
\newblock On the existence of smooth self-similar blowup profiles for the wave
  map equation.
\newblock {\em Comm. Pure Appl. Math.}, 62(5):706--728, 2009.

\bibitem{MR1913803}
Fr{\'e}d{\'e}ric H{\'e}lein.
\newblock {\em Harmonic maps, conservation laws and moving frames}, volume 150
  of {\em Cambridge Tracts in Mathematics}.
\newblock Cambridge University Press, Cambridge, second edition, 2002.
\newblock Translated from the 1996 French original, With a foreword by James
  Eells.

\bibitem{MR1381973}
S.~Klainerman and M.~Machedon.
\newblock Smoothing estimates for null forms and applications.
\newblock {\em Duke Math. J.}, 81(1):99--133 (1996), 1995.
\newblock A celebration of John F. Nash, Jr.

\bibitem{MR1452172}
Sergiu Klainerman and Sigmund Selberg.
\newblock Remark on the optimal regularity for equations of wave maps type.
\newblock {\em Comm. Partial Differential Equations}, 22(5-6):901--918, 1997.

\bibitem{MR2928107}
Nader Masmoudi and Fabrice Planchon.
\newblock Unconditional well-posedness for wave maps.
\newblock {\em J. Hyperbolic Differ. Equ.}, 9(2):223--237, 2012.

\bibitem{MR933231}
Jalal Shatah.
\newblock Weak solutions and development of singularities of the {${\rm
  SU}(2)$} {$\sigma$}-model.
\newblock {\em Comm. Pure Appl. Math.}, 41(4):459--469, 1988.

\bibitem{MR1283026}
Jalal Shatah and Michael Struwe.
\newblock Well-posedness in the energy space for semilinear wave equations with
  critical growth.
\newblock {\em Internat. Math. Res. Notices}, (7):303ff., approx.\ 7 pp.\
  (electronic), 1994.

\bibitem{MR1674843}
Jalal Shatah and Michael Struwe.
\newblock {\em Geometric wave equations}, volume~2 of {\em Courant Lecture
  Notes in Mathematics}.
\newblock New York University Courant Institute of Mathematical Sciences, New
  York, 1998.

\bibitem{MR1692140}
Michael Struwe.
\newblock Uniqueness for critical nonlinear wave equations and wave maps via
  the energy inequality.
\newblock {\em Comm. Pure Appl. Math.}, 52(9):1179--1188, 1999.

\bibitem{MR1869874}
Terence Tao.
\newblock Global regularity of wave maps. {II}. {S}mall energy in two
  dimensions.
\newblock {\em Comm. Math. Phys.}, 224(2):443--544, 2001.

\bibitem{MR2130618}
Daniel Tataru.
\newblock Rough solutions for the wave maps equation.
\newblock {\em Amer. J. Math.}, 127(2):293--377, 2005.

\end{thebibliography}

\end{document}